\documentclass[a4paper,12pt]{amsart}

\usepackage{mathtools}
\usepackage{amsfonts}
\usepackage{amssymb}
\usepackage{amsthm,enumerate}
\usepackage{amsrefs}
\usepackage{tikz}
\usetikzlibrary{decorations.markings}

\newcommand{\R}{\mathbb{R}}

\newcommand{\Z}{\mathbb{Z}}

\newcommand{\inn}[1]{f^{\mathrm{in}}_{#1}}
\newcommand{\out}[1]{f^{\mathrm{out}}_{#1}}

\DeclareMathOperator{\head}{head}
\DeclareMathOperator{\tail}{tail}

\long\def\symbolfootnote[#1]#2{\begingroup
\def\thefootnote{\fnsymbol{footnote}}\footnote[#1]{#2}\endgroup}

\newtheorem{theorem}{Theorem}[section]

\newtheorem{lemma}[theorem]{Lemma}
\newtheorem{thm}[theorem]{Theorem}
\newtheorem{prop}[theorem]{Proposition}
\newtheorem{cor}[theorem]{Corollary}

\theoremstyle{remark}
\newtheorem{rmk}[theorem]{Remark}

\theoremstyle{definition}
\newtheorem{defn}[theorem]{Definition}

\DeclareFontFamily{OT1}{pzc}{}
\DeclareFontShape{OT1}{pzc}{m}{it}{<-> s * [1.1] pzcmi7t}{}
\DeclareMathAlphabet{\mathpzc}{OT1}{pzc}{m}{it}

\title{Arcs on Spheres Intersecting At Most Twice}

 \author{Christopher Smith}
  \author{Piotr Przytycki$^{\dag}$}
\address{Dept. of Math. \& Stats.\\
                    McGill University \\
                    Montreal, Quebec, Canada H3A 0B9}

\email{christopher.smith6@mail.mcgill.ca}
\email{piotr.przytycki@mcgill.ca}
\thanks{$\dag$ Partially supported by NSERC, FRQNT, National Science Centre DEC-2012/06/A/ST1/00259, and UMO-2015/\-18/\-M/\-ST1/\-00050.}
\begin{document}
\maketitle
\begin{abstract}
\noindent Let $p$ be a puncture of a punctured sphere, and let $Q$ be the set of all other punctures. We prove that the maximal cardinality of a set $\mathcal{A}$ of arcs pairwise intersecting at most once, which start at $p$ and end in $Q$, is $|\chi|(|\chi|+1).$ We deduce that the maximal cardinality of a set of arcs with arbitrary endpoints pairwise intersecting at most twice is $|\chi|(|\chi|+1)(|\chi|+2).$
\end{abstract}

\section{Introduction}

Let $S$ be a punctured sphere with Euler characteristic $\chi < 0$. We consider collections of essential simple arcs $\mathcal{A}$ on $S$ that are pairwise non-homotopic.

\begin{thm} \label{thm:twice} The maximal cardinality of a set $\mathcal{A}$ of arcs pairwise intersecting at most twice is $$|\chi|(|\chi|+1)(|\chi|+2).$$
\end{thm}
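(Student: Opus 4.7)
The plan is to bootstrap from the sharp bound $|\chi|(|\chi|+1)$ (the principal result of the paper, for arcs from a fixed puncture $p$ to $Q = P \setminus \{p\}$ pairwise intersecting at most once).

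\textbf{Upper bound.} For each puncture $p$ of $S$, let $\mathcal{A}_p \subseteq \mathcal{A}$ denote the subset of arcs having $p$ as an endpoint. Since each arc has two endpoints and $S$ has $|\chi|+2$ punctures,
\[
2|\mathcal{A}| = \sum_{p} |\mathcal{A}_p|.
\]
Thus the theorem reduces to the pointwise estimate $|\mathcal{A}_p| \leq 2|\chi|(|\chi|+1)$ for every $p$. The arcs in $\mathcal{A}_p$ start at $p$, end in $Q$, and pairwise intersect at most twice. I would convert this data, via a surgery at $p$, into a configuration of arcs intersecting at most once on a (possibly modified) surface, to which the first theorem applies. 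A natural candidate surgery blows up $p$ into a short arc (equivalently, inserts a new puncture just beside $p$), splitting each $\alpha \in \mathcal{A}_p$ into two arcs; this doubles the count while halving pairwise intersections, and careful bookkeeping of the Euler characteristic on the modified surface should reproduce the constant $2|\chi|(|\chi|+1)$.

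\textbf{Lower bound.} The target cardinality $|\chi|(|\chi|+1)(|\chi|+2)$ equals the number of ordered triples of distinct punctures $(a,b,c)$. I would construct, for each such triple, an arc $\alpha_{a,b,c}$ from $a$ to $c$ that ``nearly visits'' $b$: concatenate a model arc from $a$ to $b$ with a model arc from $b$ to $c$, then push the concatenation slightly off $b$. Choosing the model arcs from the extremal configuration realising the first theorem's bound, a case analysis according to whether two triples $(a,b,c)$ and $(a',b',c')$ share their first, middle, or last entries (and in what order) should confirm that any two such arcs meet at most twice.

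\textbf{Main obstacle.} The technical heart is the surgery step. One must ensure simultaneously that (i) the modified arcs are genuinely pairwise non-homotopic on the new surface, (ii) pairwise intersections drop from at most two to at most one, and (iii) the Euler-characteristic arithmetic on the modified surface produces \emph{exactly} $2|\chi|(|\chi|+1)$, not a looser constant. A secondary difficulty is the lower-bound construction, where a naive detour through $b$ can generate a spurious third intersection between $\alpha_{a,b,c}$ and $\alpha_{a',b',c'}$ in degenerate configurations; a careful coordinated choice of the model arcs at every puncture (consistent across the $|\chi|+2$ stars) is needed to eliminate such cases.
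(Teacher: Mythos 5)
Your double-counting plan ($2|\mathcal{A}| = \sum_p |\mathcal{A}_p|$) already breaks once loops are allowed: an arc with both endpoints at $p$ contributes only once to $\sum_p |\mathcal{A}_p|$, yet the extremal configuration realising the bound contains $|\chi|(|\chi|+2)$ such loops (the arcs $(u,u)$ of $\mathcal{C}$ in Section~2), so the identity fails by a cubic-in-$|\chi|$ amount; redefining $\mathcal{A}_p$ as a multiset repairs the identity but then $\mathcal{A}_p$ is no longer a collection of arcs from $p$ into $Q$ and Theorem~\ref{thm:once} cannot be applied. More seriously, the proposed surgery cannot halve pairwise intersections. If $\alpha,\beta\in\mathcal{A}_p$ meet at two points, both lying far from $p$, then splitting $\alpha$ and $\beta$ near $p$ leaves both crossings between one and the same pair of halves, which therefore still meet twice. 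A local blow-up at a puncture simply cannot detect and separate intersections that occur elsewhere on the surface; no Euler-characteristic bookkeeping will fix a claim that is false at the level of curves.

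What actually makes the reduction to Theorem~\ref{thm:once} work in Section~\ref{sec:reduction} is an averaging over the \emph{whole surface} rather than over the finite set of punctures, together with a geometric device that genuinely drops the intersection count by one. One fixes a hyperbolic metric, takes geodesic representatives, and for each tip $\tau$ (pair of arcs consecutive around a puncture) forms the nib $\nu_\tau:N_\tau\to S$, a locally isometric ideal triangle. The total area of $N=\bigsqcup_\tau N_\tau$ is $2\pi|\mathcal{A}|$, so the theorem follows once one shows $\nu$ is at most $(|\chi|+1)(|\chi|+2)$-to-one. This is proved at each point $x\in S$ by passing to slits: the geodesic rays from the ideal vertex of each $N_\tau$ to the preimages of $x$. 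The slit lemma of~\cite{P} guarantees that two slits through a common point intersect at most once when the original arcs intersect at most twice — this is precisely the drop from $2$ to $1$ that your surgery was supposed to provide but does not — and then Theorem~\ref{thm:once}, applied to $S$ with an extra puncture introduced at $x$, gives the desired multiplicity bound. The hyperbolic-area integral replaces your discrete sum over punctures, and in doing so it also sidesteps the loop bookkeeping entirely. The lower-bound construction in the paper is likewise more rigid than a generic ``near-visit'' detour through $b$; the two-copies-of-an-ideal-polygon model lets one verify the $\le 2$ intersection condition face-by-face without the degenerate cases you flag.
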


\begin{rmk}
\label{rmk:conj}
We conjecture that the formula also holds for any connected, oriented surface with $\chi < 0$. The proof we provide here, however, applies only to the case of spheres.
\end{rmk}

We will reduce Theorem~\ref{thm:twice} to the following:

\begin{thm} \label{thm:once} Let $p$ be a puncture of $S$, and $Q$ be the set of all other punctures. The maximal cardinality of a set $\mathcal{A}$ of arcs pairwise intersecting at most once, which start at $p$ and end in $Q$, is $$|\chi|(|\chi|+1).$$
\end{thm}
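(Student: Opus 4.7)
The plan is to proceed by induction on $|\chi|$.

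For the base case $|\chi|=1$, the surface $S$ is the thrice-punctured sphere with $Q = \{q_1,q_2\}$, and I claim the bound $|\mathcal{A}|\le 2$. Indeed, any two non-homotopic simple arcs from $p$ to the same $q_i$ must intersect at least twice in the interior: if in minimal position they intersected at most once, their union would cut $S$ into a small number of regions, and a bigon analysis (distributing the third puncture $q_j$ among the faces) would produce an empty bigon allowing a homotopy between the two arcs. Hence at most one arc of $\mathcal{A}$ ends at each of $q_1, q_2$, giving $|\mathcal{A}|\le 2 = |\chi|(|\chi|+1)$.

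For the inductive step, assume the bound for $|\chi|-1$ and let $\mathcal{A}$ be a maximal collection on $S$ with $n = |\chi|+2\ge 4$ punctures. Pick $q\in Q$ and partition $\mathcal{A}=\mathcal{A}_q\sqcup\mathcal{A}'$ into arcs ending at $q$ and the rest. Filling in $q$ yields a sphere $\bar S$ with $n-1$ punctures, and each $\alpha\in\mathcal{A}'$ descends to an arc $\bar\alpha$ on $\bar S$ from $p$ to $Q\setminus\{q\}$. Since filling in a puncture cannot increase geometric intersection numbers, the family $\{\bar\alpha\}$ (straightened to minimal position on $\bar S$) still pairwise intersects at most once. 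The induction hypothesis then bounds the number of \emph{distinct} homotopy classes among these by $(|\chi|-1)|\chi|$.

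It remains to control the defect. Let $c(q)$ denote the number of elements $\alpha\in\mathcal{A}'$ whose image $\bar\alpha$ coincides with some $\bar\beta$ for $\beta\in\mathcal{A}'\setminus\{\alpha\}$; such collisions occur exactly when $\alpha$ and $\beta$ cobound a bigon in $S$ enclosing only the puncture $q$. We would then have $|\mathcal{A}|\le |\mathcal{A}_q|+c(q)+(|\chi|-1)|\chi|$, so the induction closes as soon as one can exhibit a choice of $q$ with
\[
|\mathcal{A}_q|+c(q)\le 2|\chi|.
\]
This combinatorial estimate is the main obstacle. To prove it, I would record, for each $q\in Q$, the cyclic arrangement at $p$ of the arcs in $\mathcal{A}_q$ together with the pairs of arcs in $\mathcal{A}'$ that cobound a $q$-bigon, and then argue either by averaging the quantity $|\mathcal{A}_q|+c(q)$ over all $q\in Q$ (using that each arc ends at exactly one puncture and each bigon contains exactly one puncture), or by choosing $q$ to minimize its degree in a suitably defined auxiliary graph on $Q$ capturing both $q$-arcs and $q$-bigons. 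The rigidity forced by each pair of arcs intersecting at most once restricts how such bigons and $q$-ending arcs can coexist around $q$, and should yield the required bound $2|\chi|$.
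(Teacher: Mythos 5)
There is a concrete error in the inductive step: with your definition of $c(q)$, the estimate $|\mathcal{A}_q|+c(q)\le 2|\chi|$ is false. Take the paper's own extremal configuration on the $5$-punctured sphere ($|\chi|=3$, $|Q|=4$): from each $r\in Q$ draw the three straight segments to $\partial D$ separating the other three punctures into three regions. Fix $q\in Q$. Then $|\mathcal{A}_q|=3$, and for each of the three other punctures $r$, filling in $q$ merges exactly two of the three arcs based at $r$ (the two bounding the region containing $q$), so both of those arcs are counted by $c(q)$. Hence $c(q)=6$ and $|\mathcal{A}_q|+c(q)=9>2|\chi|=6$. The problem is that $c(q)$ counts every arc involved in a collision rather than the genuine deficit $\ell(q)=|\mathcal{A}'|-\#\{\text{distinct classes of }\bar\alpha\}$; with $\ell(q)$ the extremal example gives equality $3+3=2|\chi|$, but your bound as written does not.

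Even after replacing $c(q)$ by the correct defect $\ell(q)$, the inequality $|\mathcal{A}_q|+\ell(q)\le 2|\chi|$ for some $q$ is the whole theorem and is not established by the averaging or auxiliary-graph sketches you outline: in the extremal example every $q$ attains equality, so there is no slack, and a naive average just reproduces the statement to be proved. This is precisely where the paper invests all its effort. Rather than a top-level induction on $|Q|$, the paper counts globally in terms of \emph{fish} (pairs of intersecting arcs based at the same puncture $r$) and an equivalence relation $\sim_q$ on $Q\setminus\{q\}$ generated by head membership; it proves the two inequalities $|\chi|\ge|\mathcal{F}_q|+c_q$ and $k_r\le c_r+\sum_{F\in\mathcal{F}^r}|t(F)|$ and then sums over $Q$, letting the fish-counting terms cancel. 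The first of these inequalities is itself proved by removing a puncture $v$, but only after a delicate argument (the ``fish cycle'' lemma, with an infinite cyclic cover calculation behind Lemma~\ref{lemma:order-fixed}) showing that $v$ can be chosen so as not to be the nose of any $q$-fish. Your proposal correctly identifies that a puncture-removal induction is lurking and correctly isolates the quantity to be controlled, but the hard content — choosing the right puncture and controlling the collision structure — is asserted rather than proved, and the naive version of the key inequality is already refuted by the extremal example.
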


\medskip\noindent \textbf{Previous Results.} Initially, cardinality questions were asked about sets $\mathcal{C}$ of essential and nonperipheral simple closed curves on an arbitrary surface. Juvan, Malni\v{c} and Mohar proved that given $k$ bounding the number of intersections of any two curves in $\mathcal{C}$, there is an upper bound on $|\mathcal{C}|$ \cite{JMM}. For $k$ fixed, a polynomial upper bound of order $k^2+k+1$ in $|\chi|$ was obtained in \cite[Cor 1.6]{P}. This was recently greatly improved by Aougab, Biringer and Gaster \cite{ABG} to an upper bound of order $\frac{|\chi|^{3k}}{(\log |\chi|)^2}$. They further proved that if the genus of the surface is fixed, and we vary the number of punctures $n$, then $|\mathcal {C}|\leq \mathcal{O}(n^{2k+2})$.

Farb and Leininger asked for the precise asymptotics in the case where $k=1$. For $S$ a torus, we have $|\mathcal{C}|\leq 3$.
For $S$ a closed genus 2 surface, Malestein, Rivin, and Theran proved that the maximal cardinality of
$\mathcal{C}$ is 12
\cite{MRT}. They also produced a lower quadratic and exponential upper bound on maximal $|\mathcal{C}|$ in terms of $|\chi|$, which was obtained independently by Farb and Leininger \cite{L}. This was later improved to a cubic upper bound in \cite[Thm 1.4]{P} and recently to an upper bound of order $\frac{|\chi|^3}{(\log |\chi|)^2}$ by Aougab, Biringer and Gaster \cite{ABG}.

It seems that corresponding questions about arcs are easier to tackle. By \cite[Thm 1.2]{P}, the maximal cardinality of a set $\mathcal{A}$ of arcs pairwise intersecting at most once is $2|\chi|(|\chi|+1)$. If $S$ is a punctured sphere, then by \cite[Thm 1.7]{P}, given two distinguished (but not necessarily distinct) punctures $p, p'$, the maximal cardinality of a set $\mathcal{A}$ of arcs pairwise intersecting at most once which start at $p$ and end at $p'$ is
${\frac12 |\chi|(|\chi|+1)}$. Theorem~\ref{thm:once} is a very useful addition to this, but note that it is much more difficult to prove that any of the theorems in \cite{P}.

Recently, A.\ Bar-Natan proved that if in Theorem~\ref{thm:twice} the arcs in $\mathcal A$ are required to start and end at a distinguished puncture $p$, then the maximal cardinality of $\mathcal{A}$ is $\frac{1}{6}|\chi|(|\chi|+1)(|\chi|+2)$ \cite{B}.

\medskip\noindent \textbf{Organisation.} In Section 2 we construct collections of arcs of appropriate size satisfying the conditions of Theorems~\ref{thm:twice} and \ref{thm:once}. We also provide a counter-example showing that Theorem~\ref{thm:once} does not generalise to a 3-punctured torus. In Section~3 we show how Theorem~\ref{thm:once} implies Theorem~\ref{thm:twice}. In Section 4 we provide the bulk of the definitions and proof of Theorem~\ref{thm:once} up to two propositions which are proven in Sections~5 and 6.

\medskip\noindent \textbf{Acknowledgement.} We thank the referee for many comments that greatly helped to improve the exposition.

\section{Examples}

\subsection{Preliminaries}

Given a punctured sphere $S$ (or, in Subsection~\ref{subs:notgeneral}, a 3-punctured torus), an \emph{arc} on $S$ is a map from $(0,1)$ to~$S$ that is proper. A proper map induces a map between topological ends of spaces, and in this sense each endpoint of $(0,1)$ is sent to a puncture of~$S$. We will say that the arc \emph{starts} and \emph{ends} at these punctures.
An arc is \emph{simple} if it is an embedding. In that case we can and will identify the arc with its image in $S$. A \emph{homotopy} between arcs $\alpha$ and~$\beta$ is a proper map $(0,1)\times [0,1]\to S$ which restricts to $\alpha$ on $(0,1)\times{0}$ and to~$\beta$ on $(0,1)\times{1}$. In particular, $\alpha$ and~$\beta$ start at the same puncture and end at the same puncture. We often identify the punctured sphere $S$ with the interior of the punctured disc~$D$ whose boundary circle $\partial D$ corresponds to a distinguished puncture~$p$ of $S$. Then each arc ending (or starting) at $p$ can be homotoped to an arc that limits to a point on $\partial D$, and we will be only considering such arcs. Note that homotopic arcs ending at $p$ might limit to different points of~$\partial D$. We occasionally also identify $S$ with a punctured annulus~$A$ whose boundary circles correspond to $p$ and another distinguished puncture~$r$, where we also restrict to arcs with limit points.

An arc $\alpha$ is \emph{essential} if it cannot be homotoped into a puncture in the sense that there is no proper map $(0,1)\times [0,1)\to S$ restricting to $\alpha$ on $(0,1)\times{0}$. Unless otherwise stated, all arcs in the article are simple and essential.

We say that arcs $\alpha$ and $\beta$ are in \emph{minimal position}, if the number of intersection points $|\alpha\cap\beta|$ cannot be decreased by a homotopy. A \emph{bigon} (respectively, \emph{half-bigon}) between arcs $\alpha$ and $\beta$ is an embedded closed disc $B\subset S$ (respectively, properly embedded half-disc $B=[0,1]\times [0,1)\subset S$) such that $B\cap (\alpha\cup \beta)=\partial B$ and both $\partial B\cap \alpha$ and $\partial B\cap \beta$ are connected. It is a well known fact, which we will frequently use, that $\alpha$ and $\beta$ are in minimal position if and only if they are transverse and there is no bigon or half-bigon between them.

\subsection{Lower bound in Theorem~\ref{thm:once}}
First, we show an example of a collection of $|\chi|(|\chi|+1)$ arcs satisfying the conditions of Theorem~\ref{thm:once}. We identify the $n$-punctured sphere with the interior of a punctured disc whose boundary corresponds to the distinguished puncture~$p$, and whose other punctures $Q$ lie on a smaller circle in the interior of the disc (see Figure~\ref{fig:disjoint}). Note that $|Q| = n-1$. From each puncture $r \in Q$, we draw $n-2$ disjoint arcs that are straight line segments dividing the disc into $n-2$ regions, each containing one puncture in $Q - \{r\}$. We obtain $(n-1)(n-2)$ arcs that pairwise intersect at most once. As $|\chi| = n - 2$, we have
$(n-1)(n-2)=|\chi|(|\chi|+1)$.

\begin{figure}[h]
\centering
\begin{tikzpicture}
\draw (2,2) circle (2cm);

\draw (1.3,1.3) -- (2,4);
\draw (1.3,1.3) -- (4,2);
\draw (1.3,1.3) -- (0.6,0.6);

\draw[fill=white] (1.3,1.3) circle (0.05cm);
\draw[fill=white] (1.3,2.7) circle (0.05cm);
\draw[fill=white] (2.7,2.7) circle (0.05cm);
\draw[fill=white] (2.7,1.3) circle (0.05cm);
\end{tikzpicture}
\caption{On the 5-punctured sphere, for each $r \in Q$ we draw three disjoint arcs, for a total of 12 arcs.}
\label{fig:disjoint}
\end{figure}
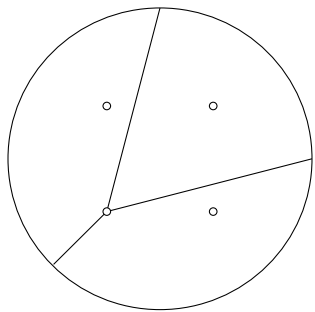

\subsection{Theorem \ref{thm:once} does not generalise}
\label{subs:notgeneral}

Now we provide a counterexample demonstrating that Theorem \ref{thm:once} does not hold in the case of a 3-punctured torus. We model the 3-punctured torus as a regular ideal octagon with edges identified as shown in Figure~\ref{fig:octagon}. After this identification we have three punctures, $\{a, b, p\}$ with $Q = \{a, b\}$, and we consider arcs connecting~$a$ or~$b$ to~$p$.

After the identifications in Figure~\ref{fig:octagon}, two of the edges connect $p$ to the puncture $a$; these will be the first two arcs in our collection. To these we add the 8 diagonals which run from the vertices labelled~$a$ or~$b$ to one of the vertices labelled $p$. Finally we add the three depicted arcs to the collection, for a total of 13 arcs. Since $|\chi| = 3$, the formula from Theorem~\ref{thm:once} evaluates to 12, showing that this theorem does not hold in general for non-spheres.

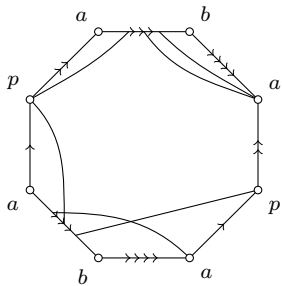
\begin{figure}[h]
\centering
\begin{tikzpicture}

\draw[postaction={decoration={markings, mark=at position 0.4 with {\arrow{>}}, mark=at position 0.5 with {\arrow{>}}, mark=at position 0.6 with {\arrow{>}}},decorate}]
	(0.9,3) -- (2.1,3);
\draw[postaction={decoration={markings, mark=at position 0.35 with {\arrow{>}}, mark=at position 0.45 with {\arrow{>}}, mark=at position 0.55 with {\arrow{>}}, mark=at position 0.65 with {\arrow{>}}},decorate}]
	(2.1,3) -- (3,2.1);
\draw[postaction={decoration={markings, mark=at position 0.45 with {\arrow{>}}, mark=at position 0.55 with {\arrow{>}}},decorate}]
	(3,0.9) -- (3,2.1);
\draw[postaction={decoration={markings, mark=at position 0.5 with {\arrow{>}}},decorate}]
	(2.1,0) -- (3,0.9);
\draw[postaction={decoration={markings, mark=at position 0.35 with {\arrow{>}}, mark=at position 0.45 with {\arrow{>}}, mark=at position 0.55 with {\arrow{>}}, mark=at position 0.65 with {\arrow{>}}},decorate}]
	(0.9,0) -- (2.1,0);
\draw[postaction={decoration={markings, mark=at position 0.4 with {\arrow{>}}, mark=at position 0.5 with {\arrow{>}}, mark=at position 0.6 with {\arrow{>}}},decorate}]
	(0,0.9) -- (0.9,0);
\draw[postaction={decoration={markings, mark=at position 0.5 with {\arrow{>}}},decorate}]
	(0,0.9) -- (0,2.1);
\draw[postaction={decoration={markings, mark=at position 0.45 with {\arrow{>}}, mark=at position 0.55 with {\arrow{>}}},decorate}]
	(0,2.1) -- (0.9,3);

\draw (3,2.1) edge[out=150,in=315] (1.7,3) ;
\draw (0.6,0.3) -- (3,0.9);

\draw (3,2.1) edge[out=160,in=305] (1.5,3);
\draw (0.45,0.45) edge[out=90,in=-45] (0,2.1);

\draw (0,2.1) edge[out=30,in=-135] (1.3,3);
\draw (0.3,0.6) edge[out=0,in=135] (2.1,0);

\draw[fill=white] (0,2.1) circle (0.05cm) node [above left] {\tiny $p$};
\draw[fill=white] (0.9,3) circle (0.05cm) node [above left] {\tiny $a$};
\draw[fill=white] (2.1,3) circle (0.05cm) node [above right] {\tiny $b$};
\draw[fill=white] (3,2.1) circle (0.05cm) node [above right] {\tiny $a$};
\draw[fill=white] (3,0.9) circle (0.05cm) node [below right] {\tiny $p$};
\draw[fill=white] (2.1,0) circle (0.05cm) node [below right] {\tiny $a$};
\draw[fill=white] (0.9,0) circle (0.05cm) node [below left] {\tiny $b$};
\draw[fill=white] (0,0.9) circle (0.05cm) node [below left] {\tiny $a$};
\end{tikzpicture}
\caption{The 11th to 13th arcs lying on a 3-punctured torus. Notice that no diagonal from either $a$ or $b$ to $p$ twice intersects any of these three arcs.}
\label{fig:octagon}
\end{figure}

\subsection{Lower bound in Theorem~\ref{thm:twice}}

Finally, we show an example of a collection of $|\chi|(|\chi|+1)(|\chi|+2)$ arcs satisfying the conditions of Theorem~\ref{thm:twice}. To this end consider an $n$-punctured sphere constructed in the following fashion. Let $P$ be an ideal $n$-gon, and glue two copies of $P$ together along their corresponding edges. We will think of the two copies of $P$ as the {\it front} and {\it back faces} of the sphere.

Our collection $\mathcal{A}$ will consist of three types of arcs. First, let $\mathcal{E}$ be the set of all $n$ edges along which the polygons are glued. Second, let $\mathcal{D}$ be the set of the $\frac{n(n-3)}{2}$ diagonals between vertices on each of the two faces, for a total of $n(n-3)$ arcs of this form.

Finally, we form a set $\mathcal{C}$ of arcs which cross between faces of the sphere. For each edge $e$ of the polygons, consider the midpoint $m$ of~$e$, and every ordered pair $(u, v)$ of punctures outside~$e$, with $u$ and $v$ not necessarily distinct. For each such choice of $e$ and $(u,v)$, we include in~$\mathcal{C}$ the arc whose first half is the straight line segment from $u$ to $m$ on the front face, and whose second half is the straight line segment from $m$ to $v$ on the back face. See Figure~\ref{fig:c}. For each edge $e$, there are $(n-2)^2$ such arcs, and we have $n$ edges, giving us a total of $n(n-2)^2$ arcs of this form.

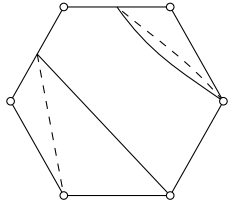
\begin{figure}[h]
\centering
\begin{tikzpicture}
\draw (0.7,2.5) -- (2.1,2.5) -- (2.8,1.25) -- (2.1,0) -- (0.7,0) -- (0,1.25) -- cycle;

\draw (2.1,0) -- (0.35,1.875);
\draw[dashed] (0.35,1.875) -- (0.7,0);

\draw (2.8,1.25) edge[out=145,in=-55] (1.4,2.5);
\draw[dashed] (2.8,1.25) edge[out=130,in=-40] (1.4,2.5);

\draw[fill=white] (0.7,2.5) circle (0.05cm);
\draw[fill=white] (2.1,2.5) circle (0.05cm);
\draw[fill=white] (2.8,1.25) circle (0.05cm);
\draw[fill=white] (2.1,0) circle (0.05cm);
\draw[fill=white] (0.7,0) circle (0.05cm);
\draw[fill=white] (0,1.25) circle (0.05cm);
\end{tikzpicture}
\caption{An example of two of the arcs in $\mathcal{C}$ on a 6-punctured sphere. The dashed lines indicate where the arc lies on the back face.}
\label{fig:c}
\end{figure}

Recalling that $n = |\chi| + 2$ we obtain that $n + n(n-3) + n(n-2)^2 = |\chi|(|\chi|+1)(|\chi|+2)$.

It remains to verify that these arcs satisfy the conditions of Theorem~\ref{thm:twice}. It is clear that each of the edges in $\mathcal{E}$ and the diagonals in $\mathcal{D}$ are homotopically distinct from one another, and as they are straight line segments on their respective faces, they can pairwise intersect at most once.

Consider the arcs in $\mathcal{C}$. First, we wish to show that arcs in $\mathcal{C}$ are homotopically distinct from each other, and from arcs in $\mathcal{D}$ or $\mathcal{E}$. As arcs in $\mathcal{C}$ each necessarily intersect exactly one of the edges, the latter is clear. If we have two arcs in $\mathcal{C}$ with the same $(u,v)$ but differing midpoint $m$, they must be homotopically distinct, because we know that these arcs must intersect the edge $e$, and do not intersect any other edge.

Now consider the case where two arcs in $\mathcal{C}$ have $u, v$, and $m$ in common. Consider Figure~\ref{fig:cut-pentagram}. We cut along the edges other than $e$ to obtain a disc. The two arcs intersect exactly once without creating any half-bigons. As a half-bigon on the $n$-punctured sphere would survive the cutting, we conclude that the two arcs cannot be homotopic. So all arcs in $\mathcal{C}, \mathcal{D}$, and $\mathcal{E}$ are homotopically distinct.

\begin{figure}
\centering
\begin{tikzpicture}
\draw (2,0.4) -- (0.8,0) -- (0,1) -- (0.8,2) -- (2,1.6);
\draw (2,0.4) -- (3.2,0) -- (4,1) -- (3.2,2) -- (2,1.6);
\draw [dashed] (2,0.4) -- (2,1.6);
\draw (0.8,2) -- (3.2,0); 
\draw (3.2,2) -- (0.8,0); 

\draw[fill=white] (0.8,0) circle (0.05cm) node [below] {\tiny $u$};
\draw[fill=white] (3.2,0) circle (0.05cm) node [below] {\tiny $u$};
\draw[fill=white] (2,0.4) circle (0.05cm) node [below] {\tiny $a$};
\draw[fill=white] (2,1.6) circle (0.05cm) node [above] {\tiny $b$};
\draw[fill=white] (0.8,2) circle (0.05cm) node [above] {\tiny $v$};
\draw[fill=white] (3.2,2) circle (0.05cm) node [above] {\tiny $v$};
\draw[fill=white] (0,1) circle (0.05cm) node [left] {\tiny $c$};
\draw[fill=white] (4,1) circle (0.05cm) node [right] {\tiny $c$};
\draw[fill=black] (2,1) circle (0.05cm) node [right] {\tiny $m$};
\end{tikzpicture}
\caption{The disc obtained by cutting the 5-punctured sphere along each edge other than $e$.} \label{fig:cut-pentagram}
\end{figure}
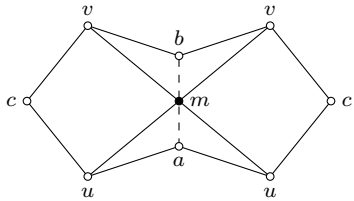

Finally, we wish to show that arcs in $\mathcal{C}$ pairwise intersect at most twice with arcs in $\mathcal{C} \cup \mathcal{D} \cup \mathcal{E}$. Note that the arcs in $\mathcal{D}$ are diagonals, each lying on exactly one face of $S$, and the arcs in $\mathcal{C}$ each consist of two straight line segments, one lying on each face. As such, two arcs in $\mathcal{C}$ may pairwise intersect at most twice (at most once on each face), and arcs in $\mathcal{D}$ and $\mathcal{E}$ pairwise intersect at most once with arcs in $\mathcal{C}$.

\begin{rmk}
Since any surface of Euler characteristic $\chi<0$ is obtained by appropriately gluing two ideal $|\chi|+2$ gons, the analogous construction of $\mathcal{A}$ gives lower bound in Remark~\ref{rmk:conj}.
\end{rmk}

\section{Reduction to Theorem~\ref{thm:once}}
\label{sec:reduction}

In this section we adapt a proof from \cite{P} and use Theorem~\ref{thm:once}, whose proof is postponed, in order to obtain the upper bound on $|\mathcal{A}|$ in Theorem~\ref{thm:twice}. We begin with some definitions.

We equip $S$ with an arbitrary complete hyperbolic metric, and we realise all arcs as geodesics. It is well known that they are then pairwise in minimal position. Around each puncture the metric is that of a hyperbolic cusp, inside which the arcs are disjoint and appear in a cyclic order.

\begin{defn}[{\cite[Def 2.1]{P}}] A {\it tip} $\tau$ of $\mathcal{A}$ is a pair $(\alpha,\beta)$ of oriented arcs in $\mathcal{A}$ starting at the same puncture and consecutive in the cyclic order. That is to say that there is no other arc in $\mathcal{A}$ issuing from this puncture in the clockwise oriented cusp sector from $\alpha$ to $\beta$.

Let $\tau = (\alpha, \beta)$ be a tip and let $N_\tau$ be an open abstract ideal hyperbolic triangle with vertices $a, t, b$. The tip $\tau$ determines a unique local isometry $\nu_\tau : N_\tau \to S$ sending $ta$ to $\alpha$ and $tb$ to $\beta$ and mapping a neighbourhood of $t$ to the clockwise oriented cusp sector from $\alpha$ to $\beta$. We call $\nu_\tau$ the {\it nib} of $\tau$. See Figure~\ref{fig:nib}.
\end{defn}

\begin{prop}\label{prop:piotr} Suppose that the arcs in $\mathcal{A}$ pairwise intersect at most twice. Let $\nu : N = \sqcup_\tau N_\tau \to S$ be the disjoint union of all the nibs $\nu_\tau$. Then for each point $p \in S$, the preimage $\nu^{-1}(p)$ consists of at most $(|\chi|+1)(|\chi|+2)$ points.
\end{prop}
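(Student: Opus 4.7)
The plan is to reduce to Theorem~\ref{thm:once} applied to the punctured sphere $S'$ obtained from $S$ by adding $p$ as an extra puncture. After a small perturbation we may assume $p$ avoids every arc of $\mathcal{A}$ and every image $\nu_\tau(\partial N_\tau)$, so every preimage in $\nu^{-1}(p)$ is an interior point of some nib. Since $|\chi(S')| = |\chi|+1$, Theorem~\ref{thm:once} applied to $S'$ with distinguished puncture $p$ yields the bound
\[
|\chi(S')|(|\chi(S')|+1) = (|\chi|+1)(|\chi|+2),
\]
exactly matching the target; so it suffices to inject $\nu^{-1}(p)$ into a collection of arcs on $S'$ from $p$ to the other punctures that pairwise intersect at most once.

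For each preimage $x \in N_\tau$ of $p$, where $\tau=(\alpha,\beta)$ is based at some puncture $q$ with cusp vertex $t$, I define $\gamma_x$ to be the image under $\nu_\tau$ of the geodesic in $N_\tau$ from $t$ to $x$. Because $\nu_\tau$ is a local isometry from an ideal triangle and the geodesic from a cusp to an interior point stays inside the triangle, $\gamma_x$ is a simple geodesic arc on $S'$ from $q$ to $p$. The first thing to check is that the assignment $x \mapsto [\gamma_x]$ into homotopy classes of arcs on $S'$ is injective: distinct preimages of $p$ (whether in the same or in different nibs) yield rays leaving $q$ in distinct directions within the corresponding cusp sector, so their geodesic images on $S'$ are pairwise non-homotopic.

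The main obstacle is to prove that $\gamma_{x_1}$ and $\gamma_{x_2}$ intersect at most once on $S'$, i.e.\ in $S\setminus\{p\}$. Assume, towards a contradiction, that they meet at two interior points $y_1, y_2$. Pulling these intersections back along the geodesic rays in $N_{\tau_1}$ and $N_{\tau_2}$, and exploiting that each $\nu_{\tau_i}$ is a local isometry from an open ideal triangle whose two cusp-sides map into $\mathcal{A}$, one produces subsegments of $\alpha_1,\beta_1,\alpha_2,\beta_2$ together with pieces of $\gamma_{x_1},\gamma_{x_2}$ bounding a region that either forms a bigon or half-bigon (absurd since the arcs of $\mathcal{A}$ are geodesics in minimal position) or forces one of the pairs among $(\alpha_1,\alpha_2),(\alpha_1,\beta_2),(\beta_1,\alpha_2),(\beta_1,\beta_2)$ to intersect more than twice in $S$, contradicting the hypothesis on~$\mathcal{A}$. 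The combinatorial step of enumerating the relative positions of $x_1,x_2$ in their two nibs and showing that every possible double intersection of $\gamma_{x_1}$ and $\gamma_{x_2}$ translates into an excess intersection among these four boundary arcs is, I expect, the technical heart of the proof, and the unique place where the "at most twice" hypothesis on $\mathcal{A}$ gets used.

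Once injectivity and the pairwise-once property are established, Theorem~\ref{thm:once} applied to $S'$ with distinguished puncture $p$ delivers $|\nu^{-1}(p)| \leq (|\chi|+1)(|\chi|+2)$. The degenerate cases in which $p$ lies on some arc of $\mathcal{A}$ or on the image $\nu_\tau(\partial N_\tau)$ of a nib boundary follow from the generic case by a standard limiting argument, since $\nu$ is a piecewise local isometry from a compact set of triangles and $\nu^{-1}(\cdot)$ is upper semicontinuous.
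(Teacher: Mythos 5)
Your overall strategy is exactly the one the paper uses: introduce an extra puncture at $p$ to form $S'$, send each $x\in\nu^{-1}(p)$ to the slit from the cusp vertex to $x$, observe this gives arcs on $S'$ from $p$ to punctures of $S$ pairwise intersecting at most once, and invoke Theorem~\ref{thm:once} on $S'$ to get the bound $(|\chi|+1)(|\chi|+2)$. So the reduction is right.

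However, your write-up leaves a genuine gap at precisely the point you flag yourself. You write that the combinatorial step translating a double intersection of two slits into a bigon/half-bigon or into an excess intersection among the four nib boundary arcs is, ``I expect, the technical heart of the proof,'' and you do not carry it out. That step is exactly the content of Lemma~\ref{lemma:slits} (quoted from \cite[Lem 3.2]{P}), which is stated in the paper for this purpose and should simply be cited: with $k=2$ it says that for $n\neq n'$ with $\nu(n)=\nu(n')$, the slits at $n,n'$ intersect at most once outside the common endpoint. Likewise, your assertion that each $\gamma_x$ is a simple arc on $S'$ is not established by the remark that the geodesic ray stays inside the ideal triangle; that only gives injectivity in $N_\tau$, not in $S$ after the local isometry $\nu_\tau$. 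This is Lemma~\ref{lem:slits_embed} (quoted from \cite[Lem 2.5]{P}), again available to cite. Similarly, injectivity of $x\mapsto[\gamma_x]$ needs the embeddedness of slits, not merely the observation that the rays leave $q$ in distinct directions (two non-homotopic-looking rays near the cusp could a priori become homotopic arcs on $S'$).

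Once Lemmas~\ref{lem:slits_embed} and~\ref{lemma:slits} are invoked, the perturbation of $p$ off the arcs and the closing limiting argument become unnecessary: both lemmas apply to an arbitrary point $p\in S$, so the proof is uniform and the paper's version is correspondingly shorter and cleaner. In summary: correct reduction, but the key analytic inputs are left as sketches when they are in fact already supplied as quoted lemmas; as written the proof is incomplete.
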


In order to prove Proposition~\ref{prop:piotr}, we require some further results from \cite{P}.

\begin{defn}[{\cite[Def 2.4]{P}}] Let $n \in N_\tau$ be a point in the domain of a nib. The {\it slit} at $n$ is the restriction of $\nu_\tau$ to the geodesic ray in $N_\tau$ joining $t$ with $n$. See Figure~\ref{fig:nib}.\end{defn}

\begin{figure}
\begin{center}
\includegraphics[width=0.35\textwidth]{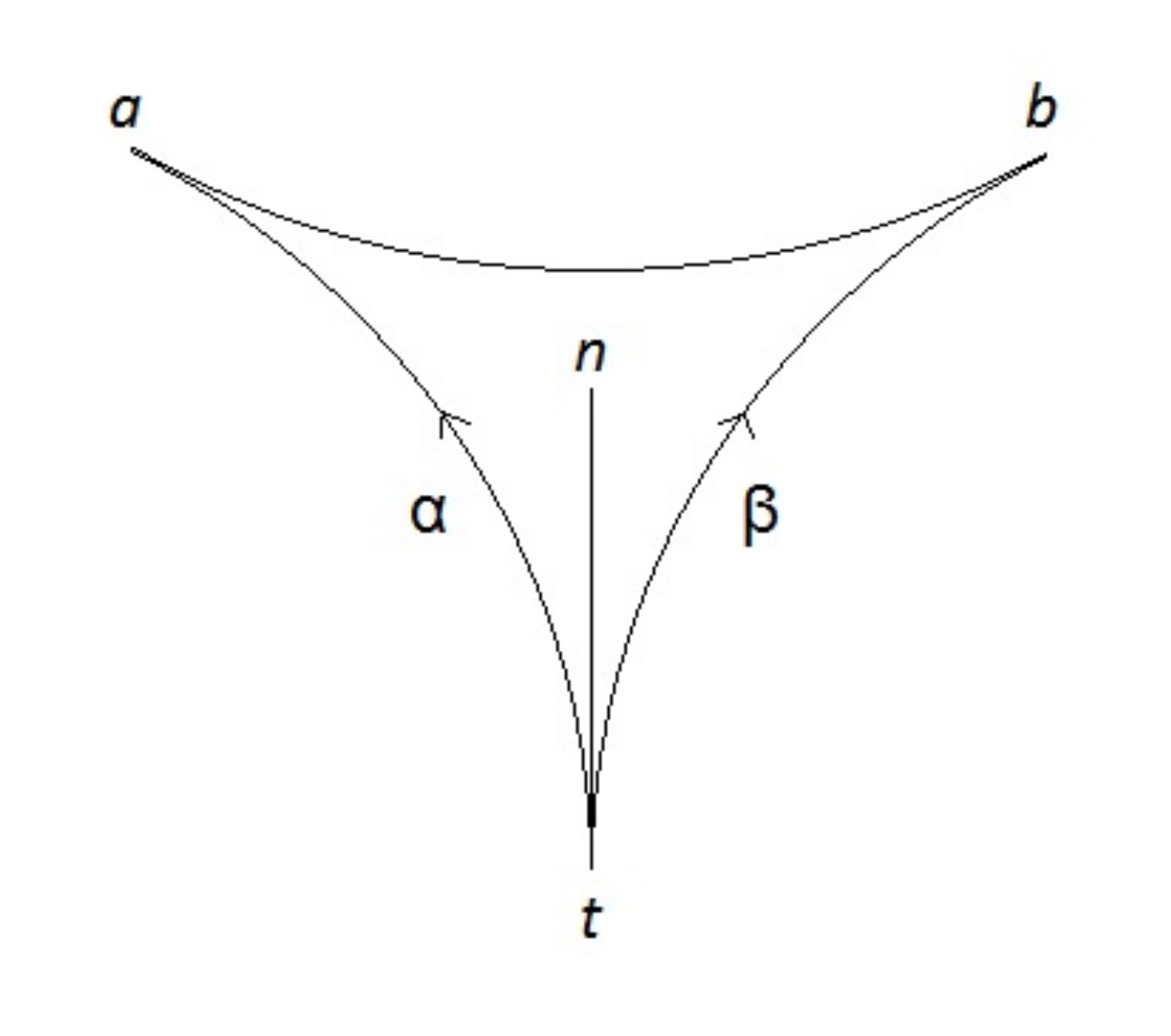}
\end{center}
\caption{A tip, its nib and a slit}
\label{fig:nib}
\end{figure}

\begin{lemma}[{\cite[Lem 2.5]{P}}] \label{lem:slits_embed} A slit is an embedding. \end{lemma}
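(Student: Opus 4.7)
The approach is to pass to the universal cover $\mathbb{H}^{2}\to S$ and show that a self-intersection of the slit would force two lifted copies of the nib-triangle to overlap in a way that is incompatible with the hypothesis that $(\alpha,\beta)$ are consecutive at the puncture.

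First, since $N_\tau$ is simply connected and $\nu_\tau$ is a local isometry between complete hyperbolic surfaces, $\nu_\tau$ lifts to an isometric embedding $\tilde\nu_\tau\colon N_\tau\hookrightarrow\mathbb{H}^{2}$. Its image $\tilde N_\tau$ is an ideal triangle whose cusp vertex $\tilde t$ is a lift of the puncture of $\tau$, and whose two sides incident to $\tilde t$ are initial segments of lifts $\tilde\alpha,\tilde\beta$ of $\alpha,\beta$. The slit $\sigma$ lifts to an embedded geodesic segment $\tilde\sigma\subset\tilde N_\tau$ from the ideal point $\tilde t$ to the interior point $\tilde\nu_\tau(n)$.

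Now suppose for contradiction that $\sigma$ is not injective. Then distinct points $x,y\in[t,n]$ satisfy $\nu_\tau(x)=\nu_\tau(y)$, which translates to the existence of a non-trivial deck transformation $\gamma$ with $\gamma\tilde\nu_\tau(x)=\tilde\nu_\tau(y)$. Consequently $\tilde\sigma$ and $\gamma\tilde\sigma$ meet at the finite point $\tilde\nu_\tau(y)$. If they were to lie on a common geodesic line, then $\gamma$ would preserve it together with its unique ideal endpoint $\tilde t$, making $\gamma$ a non-trivial parabolic based at $\tilde t$; but such a parabolic sends every geodesic ray from $\tilde t$ to a distinct ray, so it cannot share a finite point with $\tilde\sigma$. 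Hence $\tilde\sigma$ and $\gamma\tilde\sigma$ cross transversely at $\tilde\nu_\tau(y)$, and the two ideal triangles $\tilde N_\tau$ and $\gamma\tilde N_\tau$ overlap in a nontrivial region.

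To extract a contradiction from this overlap, I would trace $\gamma\tilde\sigma$ from its cusp vertex $\gamma\tilde t$ (another lift of the same puncture) inward to the crossing at $\tilde\nu_\tau(y)$. It must enter $\tilde N_\tau$ by crossing one of its three boundary geodesics $\tilde\alpha$, $\tilde\beta$, or the side $\widetilde{ab}$ opposite $\tilde t$; by symmetry the analogous statement holds for $\tilde\sigma$ entering $\gamma\tilde N_\tau$. Exploiting that distinct lifts of the simple geodesic $\alpha$ (respectively $\beta$) are disjoint in $\mathbb{H}^{2}$, and that any two distinct geodesics of $\mathbb{H}^{2}$ meet in at most one point, the combinatorics of the overlap force either a geodesic bigon in $\mathbb{H}^{2}$ — which cannot exist — or a lift of the puncture placed so that one of the lifted arcs $\gamma\tilde\alpha,\gamma\tilde\beta$ enters the cusp sector at $\tilde t$ bounded by $\tilde\alpha$ and $\tilde\beta$, contradicting the consecutiveness built into the definition of a tip. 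The main obstacle is the case analysis in this final step, in particular the subcase where $\gamma\tilde\sigma$ enters $\tilde N_\tau$ through $\widetilde{ab}$ rather than through $\tilde\alpha$ or $\tilde\beta$; handling it requires chasing the reciprocal penetration of $\tilde\sigma$ into $\gamma\tilde N_\tau$ and iterating the argument until one of the \emph{simple} boundary crossings is reached.
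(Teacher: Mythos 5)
The paper does not prove this lemma itself; it is quoted from~\cite{P} (Lemma~2.5 there), so there is no in-paper argument to compare yours against. Evaluating your sketch on its own terms: the opening moves are sound. Lifting $\nu_\tau$ to an isometric embedding onto an ideal triangle $\tilde N_\tau\subset\mathbb{H}^2$, translating a self-intersection of the slit into a nontrivial deck transformation $\gamma$ with $\gamma\tilde\sigma\cap\tilde\sigma\neq\emptyset$, and disposing of the collinear case via the parabolic argument are all essentially correct (the phrase ``its unique ideal endpoint'' should refer to the ideal endpoint $\tilde t$ of the \emph{ray} $\tilde\sigma$, not of the full geodesic line, but the underlying logic --- a $\gamma$ preserving the full line and fixing $\tilde t$ would be a parabolic fixing a second ideal point, hence trivial --- is right).

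The genuine gap is the final step, which you flag yourself, and it is not a small one. Two concrete problems. First, the side $\widetilde{ab}$ opposite $\tilde t$ is \emph{not} a lift of an arc of $\mathcal A$, so the disjointness of distinct lifts of the simple arcs $\alpha$, $\beta$ gives you no control over how $\widetilde{ab}$ interacts with $\gamma\widetilde{ab}$, $\gamma\tilde\alpha$, $\gamma\tilde\beta$ (or how $\gamma\widetilde{ab}$ interacts with $\tilde\alpha$, $\tilde\beta$); your proposal to ``iterate until a simple boundary crossing is reached'' is not visibly a terminating or well-founded procedure. Second, the contradiction you aim for does not follow from what you have: consecutiveness of the tip is a \emph{local} condition at the cusp, namely that no arc of $\mathcal A$ incident to $\tilde t$ lies in the wedge between $\tilde\alpha$ and $\tilde\beta$ near $\tilde t$. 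The lift $\gamma\tilde\alpha$ (or $\gamma\tilde\beta$) that your overlap produces is incident to $\gamma\tilde t\neq\tilde t$, and such a lift may perfectly well traverse the wedge far from $\tilde t$ without contradicting anything. To get a contradiction along these lines you would have to force the offending arc all the way into an embedded horoball neighbourhood of $\tilde t$, which the mere overlap of $\tilde N_\tau$ and $\gamma\tilde N_\tau$ does not yield. As written, this is a reasonable framework with the decisive step missing.
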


\begin{lemma}[{\cite[Lem 3.2]{P}}]\label{lemma:slits} Suppose that the arcs in $\mathcal{A}$ pairwise intersect at most $k \geq 1$ times. If for distinct $n, n' \in N$ we have $\nu(n) = \nu(n')$, then the images in $S$ of slits at $n, n'$ intersect at most $k-1$ times outside the endpoint.\end{lemma}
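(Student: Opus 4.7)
The plan is an argument by contradiction: assume that for some distinct $n, n' \in N$ with $\nu(n)=\nu(n')=p$, the images of the slits $s, s'$ meet at $\geq k$ points outside $p$; I will produce two arcs of $\mathcal A$ intersecting more than $k$ times, contradicting the hypothesis.

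Write the tips as $\tau=(\alpha,\beta)$ and $\tau'=(\alpha',\beta')$. My first step is to observe that each slit has two natural extensions to a full arc: inside the ideal triangle $N_\tau$, the point $n$ is joined by a geodesic ray to either ideal vertex $a$ or $b$, and this ray is disjoint from the segment $tn$ except at $n$. Concatenating with the slit and pushing via $\nu_\tau$ gives a broken path in $S$ homotopic rel endpoints to $\alpha$ (respectively $\beta$), because inside $N_\tau$ the concatenation $t\to n\to a$ is homotopic to the side $ta$. Doing the same in $N_{\tau'}$ yields broken paths homotopic to $\alpha'$ and $\beta'$. By Lemma \ref{lem:slits_embed} each extension meets the corresponding slit only at $n$ or $n'$.

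The heart of the argument is a local analysis at $p$. The two slits arrive at $p$ with definite tangent directions; these directions carve a small disc around $p$ into (generically) four sectors. The two possible extension directions of $s$ past $n$ lie on the two local sides of $s$, and likewise for $s'$. I would then choose the extension of $s$ and the extension of $s'$ so that they leave $p$ into opposite local sides of the other slit. This choice produces one additional transverse crossing at $p$ beyond the $\geq k$ crossings of $s$ and $s'$ already present away from $p$, giving two broken paths with $\geq k+1$ transverse intersections. Straightening each to its geodesic representative — necessarily one of $\alpha,\beta,\alpha',\beta'\in\mathcal A$ — can only lower the count via the removal of innermost bigons or half-bigons; I would rule this out by showing that no bigon or half-bigon can be built from an arc in $\mathcal A$ and one of our broken paths without contradicting the embeddedness of the slit or the minimal position of the arcs in $\mathcal A$.

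The main obstacles I anticipate are the degenerate configurations at $p$. First, the two slits may be tangent (not transverse) at $p$, so the extension directions must be chosen using second-order data, and one must check that a suitable pairing still yields an extra crossing. Second, it is possible that $\tau=\tau'$, so that $n, n'$ lie in the same triangle $N_\tau$; in this case one must verify that at least one admissible pairing of extensions produces two distinct arcs of $\mathcal A$ rather than two copies of the same arc. Once these are dispatched, the counting argument closes cleanly against the assumption that arcs in $\mathcal A$ pairwise intersect at most $k$ times.
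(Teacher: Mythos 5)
Your outline — extend each slit past its endpoint to a broken path homotopic to one of the two arcs of the tip, choose the two extensions so that they cross transversally at $p$, then straighten to contradict the hypothesis on $\mathcal A$ — is the right idea, and matches the spirit of the argument in \cite{P}. However, there are two places where what you assert is not what you can actually use.

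First, the appeal to Lemma~\ref{lem:slits_embed} is a misapplication. That lemma says the slit, i.e.\ the restriction of $\nu_\tau$ to the segment $[t,n]$, is an embedding; it says nothing about the segment $[n,a]$, nor about the union $[t,n]\cup[n,a]$. Since $\nu_\tau$ is merely an immersion, the extension $\nu_\tau([n,a])$ can self-intersect and can meet $\nu_\tau([t,n])$ at points other than $p$. Hence the broken path you build need not be a simple arc, and the subsequent minimal-position argument (``can only lower the count via the removal of innermost bigons or half-bigons'') no longer applies in the clean form you invoke: the bigon criterion, as stated and used in this paper, is for pairs of simple arcs. This is not a cosmetic point; it is exactly where the work of the proof lies, and you leave it at ``I would rule this out'' without an argument. (If the broken paths were simple, the argument would in fact close, because every side of a putative bigon or half-bigon is a geodesic segment, possibly meeting the other side at the single corner $p$, and geodesic bigons and half-bigons do not exist.)

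Second, the claim that you can always pair the extensions so as to create a transverse crossing at $p$ is true but not obvious, and you do not justify it. It rests on the observation that, at the interior point $n$ of the ideal triangle, the three directions toward $t$, $a$, $b$ divide the circle of directions into three arcs each of length strictly less than $\pi$; without that constraint one can easily write down direction data for which no choice of extensions yields alternation at $p$. You flag the tangency and $\tau=\tau'$ degeneracies, which is good, but you should also note that the ``same arc'' problem can occur for $\tau\neq\tau'$ whenever the two tips share one of their arcs, not only when $\tau=\tau'$.
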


\begin{proof}[Proof of Proposition~\ref{prop:piotr}] Let $p \in S$ be an arbitrary point. Let $S'$ be the sphere obtained from $S$ by introducing a puncture at $p$. By Lemma~\ref{lem:slits_embed}, the slit at any $n\in \nu^{-1}(p)$ embeds in $S$, so it is a simple arc on $S'$. By Lemma~\ref{lemma:slits}, for any two points $n, n' \in \nu^{-1}(p)$, the slits at $n$ and $n'$ intersect at most once. Therefore on $S'$, we have a collection of simple arcs which start at $p$ and end at punctures of $S$, and these arcs pairwise intersect at most once.

If $\chi$ is the Euler characteristic of $S$, then $\chi - 1$ is the Euler characteristic of $S'$, and so by Theorem~\ref{thm:once} the maximal size of a collection of such slits is $|\chi-1|(|\chi-1| + 1) = (|\chi|+1)(|\chi|+2)$.  Since each point in the preimage $\nu^{-1}(p)$ contributes an arc to this collection, this gives us the desired bound on the size of $\nu^{-1}(p)$.
\end{proof}

\begin{proof}[Proof of Theorem~\ref{thm:twice}] Each arc in $\mathcal{A}$ is the first arc of exactly two tips, depending on its orientation, so the area of $N$ is $2|\mathcal{A}|\pi$. The area of the punctured sphere $S$ is $2\pi|\chi|$. By Proposition~\ref{prop:piotr}, the map $\nu : N \to S$ is at most $(|\chi|+1)(|\chi|+2)$-to-1, and so we have $$2|\mathcal{A}|\pi \leq 2\pi|\chi|(|\chi|+1)(|\chi|+2).$$
\end{proof}

\section{Outline of the proof of Theorem~\ref{thm:once}}

For the remainder of the paper, let $p$ be the distinguished puncture of the punctured sphere $S$. We will represent $S$ as the interior of a punctured disc $D$ with boundary associated with $p$, and with a collection of remaining punctures $Q$. Let $\mathcal{A}$ be as stated in Theorem~\ref{thm:once}. We will consider the arcs to be oriented from a puncture in $Q$ to the puncture~$p$. We assume without loss of generality that arcs in $\mathcal{A}$ are pairwise in minimal position.

If for each puncture $r \in Q$ the arcs in $\mathcal{A}$ starting at $r$ are disjoint, then Theorem~\ref{thm:once} is easy to prove: there can be at most $|Q|-1$ arcs in $\mathcal{A}$ from each puncture on $D$ (refer back to Figure~\ref{fig:disjoint}), and there are $|Q|$ punctures on the disc, giving us $|Q| \cdot (|Q|-1) = |\chi|\cdot(|\chi|+1)$ arcs, recalling $|\chi| = |Q|-1$. It makes sense then to begin by considering some properties of arcs starting at the same puncture~$r$ which intersect.

\subsection{Fish} In this subsection we will introduce our main tool to account for intersecting arcs starting at the same puncture $r$.

\begin{figure}[h]
\centering
\begin{tikzpicture}
\draw (2,2) circle (2cm);

\draw plot [smooth, tension=0.7] coordinates {(2,2) (2.5,2.3) (3,2) (3.5,1.5) (3.85,1.3)};
\node at (2.5,2.3) [above] {\tiny $\beta$};
\draw plot [smooth, tension=0.7] coordinates {(2,2) (2.5,1.7) (3,2) (3.5,2.5) (3.85, 2.7)};
\node at (2.5,1.7) [below] {\tiny $\alpha$};

\node at (4,2) [right] {\tiny $p$};
\node at (2,2) [left] {\tiny $r$};

\draw[fill=white] (2,2) circle (0.05cm);
\draw[fill=white] (2.5,2) circle (0.05cm);
\draw[fill=white] (3.5,2) circle (0.05cm);
\end{tikzpicture}
\caption{A fish $(\alpha, \beta)$.}
\label{fig:fish}
\end{figure}
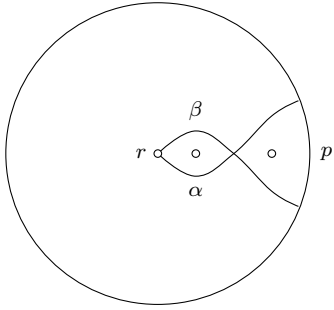

\begin{defn}
Let $r \in Q$. A {\em fish} $F$ with {\em nose} $r$ is a pair of arcs $\alpha, \beta \in \mathcal{A}$ from $r$ to $p$ which intersect. See Figure~\ref{fig:fish}.

The arcs $\alpha$ and $\beta$ divide the disc $D$ into three regions. The {\em head} of~$F$, denoted by $\head(F)$, is the region adjacent to $r$, but not to $p$. The {\em tail} of $F$, denoted by $\tail(F)$, is the region adjacent to $p$, but not to $r$. The remaining region is thought of as being {\em outside} the fish. We write $F=(\alpha,\beta)$ as an ordered pair whenever $\head(F)$ covers the counterclockwise oriented cusp sector between $\alpha$ and $\beta$.

We denote by $h(F)$ the subset of the punctures $Q$ which are in $\head(F)$, and by $t(F)$ the set of punctures which are in $\tail(F)$. We do not consider the nose of the fish to be contained in the head. If $q \in t(F)$, then we say that $F$ is a $q$-{\em fish}.
\end{defn}

Note that if $S$ has only $3$ punctures, then there are no fish.

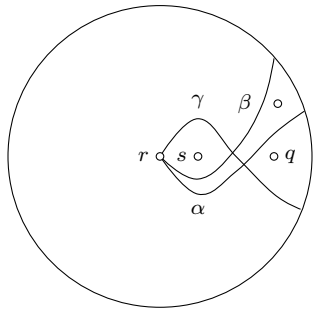
\begin{figure}[h]
\begin{tikzpicture}
\draw (2,2) circle (2cm);

\draw plot [smooth, tension=0.7] coordinates {(2,2) (2.5,2.5) (3,2) (3.5,1.5) (3.85,1.3)};
\node at (2.5,2.5) [above] {\tiny $\gamma$};

\draw plot [smooth, tension=0.7] coordinates {(2,2) (2.5,1.7) (3,2.1) (3.35,2.7) (3.5, 3.3)};
\node at (3.35,2.7) [left] {\tiny $\beta$};

\draw plot [smooth, tension=0.7] coordinates {(2,2) (2.5, 1.5) (3,1.8) (3.5, 2.3) (3.9, 2.6)};
\node at (2.5,1.5) [below] {\tiny $\alpha$};

\draw[fill=white] (2,2) circle (0.05cm) node [left] {\tiny $r$};
\draw[fill=white] (2.5,2) circle (0.05cm) node [left] {\tiny $s$};
\draw[fill=white] (3.5,2) circle (0.05cm) node [right] {\tiny $q$};
\draw[fill=white] (3.55,2.7) circle (0.05cm);
\end{tikzpicture}
\caption{A minimal fish $(\beta, \gamma)$ and a non-minimal fish $(\alpha, \gamma)$. In this picture we have $r \sim_q s$.}
\label{fig:non-minimal}
\end{figure}

\begin{defn}
For each puncture $r \in Q$, consider the collection of arcs in $\mathcal{A}$ starting at $r$. Recall that we assign these arcs a cyclic order (based on their intersection points with a sufficiently small circle about the puncture $r$). Here and in the remainder of the article, unlike in Section~\ref{sec:reduction}, we order them counterclockwise (this convention gives better figures).

A fish $F=(\alpha, \beta)$ with nose $r$ is {\em minimal} if $\alpha$ and $\beta$ are consecutive in the cyclic order around $r$. Equivalently, in $\mathcal A$ there is no arc from $r$ which begins in $\head(F)$.

Denote by $\mathcal{F}^r$ the set of minimal fish with nose $r$, and by $\mathcal{F}_q$ the set of minimal $q$-fish.
\end{defn}

\begin{defn}
Let $q \in Q$. The equivalence relation $\sim_q$ on $Q-\{q\}$ is the equivalence relation generated by setting $r \sim_q s$ whenever there is a $q$-fish $F$ (not necessarily minimal) with nose $r$ such that $s \in h(F)$ (see Figure~\ref{fig:non-minimal}). Define $c_q$ to be the number of equivalence classes of $\sim_q$.
\end{defn}

\subsection{Proof of Theorem \ref{thm:once}}

Recall our previous discussion that the interesting case for proving Theorem~\ref{thm:once} was when arcs from the same puncture $r$ intersect. Clearly if we allow arcs from $r$ to intersect one another, we will be able to include more arcs from $r$ in our collection. We can think of this as a positive contribution to $|\mathcal{A}|$ at $r$. What we will show is that this positive contribution is balanced out by a matching negative contribution.

Each of these extra arcs from $r$ will cause punctures in $Q - \{r\}$ to lie in the tails of fish with nose $r$. If a puncture $q$ lies in the tail of some fish, then this will reduce the number of equivalence classes of $\sim_q$. In turn, this will reduce the number of arcs in our collection which may begin at $q$.

The following lemma, propositions, and corollary can be thought of as proving that each positive contribution at one puncture must be matched by a negative contribution at some combination of other punctures.

We postpone the proofs of Propositions~\ref{lemma:classes} and~\ref{lemma:bigons} to Sections~\ref{section:classes} and~\ref{section:bigons} respectively. Here we start by deducing Corollary~\ref{cor:bigons} from Proposition~\ref{lemma:bigons}. We then show that Proposition~\ref{lemma:classes} and Corollary~\ref{cor:bigons} imply Theorem~\ref{thm:once}.

\begin{prop} \label{lemma:classes}
Let $q \in Q$. Then $$|\chi| \geq |\mathcal{F}_q| + c_q.$$
\end{prop}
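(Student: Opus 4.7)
My plan is to prove the stronger per-equivalence-class statement: for every equivalence class $C$ of $\sim_q$ on $Q-\{q\}$, the number of minimal $q$-fish with nose in $C$ is at most $|C|-1$. Summing over the $c_q$ classes, which partition the set $Q-\{q\}$ of size $|\chi|$, then recovers the inequality $|\mathcal{F}_q|+c_q\leq|\chi|$.

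To obtain the per-class bound, I would build a graph $G_C$ on vertex set $C$ in which each minimal $q$-fish with nose in $C$ contributes exactly one edge, and show that $G_C$ is a forest. To a minimal $q$-fish $F$ with nose $r$ I assign the edge $\{r,s_F\}$ for a carefully chosen puncture $s_F\in h(F)$. This is well defined: $h(F)$ cannot be empty, as otherwise $\head(F)$ would be an embedded half-bigon at $r$ between the two arcs of $F$, violating the minimal position assumption. Moreover $s_F\in[r]_{\sim_q}=C$ directly from the definition of $\sim_q$, so the edge lies in $C$. Finally, two minimal $q$-fish with the same nose $r$ have disjoint heads, because their heads are wedged between distinct consecutive pairs of arcs from $r$; the chosen $s_F$'s are therefore distinct, and the fish contribute distinct edges.

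The main obstacle is to verify that $G_C$ has no cycle, and I would do this via a nesting principle for minimal $q$-fish. Suppose $F'$ is a minimal $q$-fish whose nose $r'$ lies in $h(F)$ for some other minimal $q$-fish $F$. The two arcs of $F'$ start inside $\head(F)$ and end at $p\notin\overline{\head(F)}$, so each must exit $\head(F)$; but because any two arcs in $\mathcal{A}$ cross at most once, each arc of $F'$ exits through $\partial\head(F)$ at most once and cannot return. Using in addition that $q$ lies in the tails of both $F$ and $F'$, I expect one can force the first intersection of the two arcs of $F'$ to occur inside $\head(F)$, which yields $\head(F')\subsetneq\head(F)$. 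Once this is in hand, selecting $s_F$ to lie in the \emph{outermost layer} of $\head(F)$---that is, in no $\head(F')$ strictly nested inside $\head(F)$---and directing each edge $r\to s_F$ produces a strictly descending chain of heads along any directed walk, which precludes cycles.

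The principal technical difficulty will be the case analysis behind the nesting principle: the two arcs of $F'$ may potentially exit $\head(F)$ through either of the two arcs of $F$, and the placement of $q$ in both tails must be used in each case to prevent the first intersection of $F'$ from escaping $\head(F)$. Once the nesting is established, the forest property of $G_C$ and hence the inequality $m_C\leq|C|-1$ follow readily, and summing over the equivalence classes concludes the proof.
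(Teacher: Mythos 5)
Your approach is genuinely different from the paper's. The paper proves Proposition~\ref{lemma:classes} by induction on the number of punctures: it establishes (Lemma~\ref{lemma:cycles}) that some puncture $v\neq q$ is not the nose of any $q$-fish, deletes $v$, and then carefully tracks how $|\mathcal{F}_q|$ and $c_q$ can change, using Lemma~\ref{lem:tailsdisjoint}, a monotonicity lemma for cyclic orders (Lemma~\ref{lemma:order-fixed}), and an injective map from non-vanishing minimal fish to fish on the smaller surface (Lemma~\ref{lemma:passing}). You instead propose a direct, per-equivalence-class counting argument via a graph whose edges are minimal fish, aiming to show it is a forest. The reduction to the per-class bound $|\mathcal{F}^C_q|\le |C|-1$ is correct, the non-emptiness of $h(F)$ via the half-bigon observation is correct, and the disjointness of heads of distinct minimal fish sharing a nose is correct. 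If your plan could be completed it would arguably be cleaner than the paper's induction, since it avoids the delicate bookkeeping around vanishing fish and re-minimalisation.

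However, as written the argument has a genuine gap that you yourself flag but do not close: the \emph{nesting principle}. You assert that if a minimal $q$-fish $F'$ has its nose $r'$ in $h(F)$ for another minimal $q$-fish $F$, then $\head(F')\subsetneq\head(F)$, but your justification is only the heuristic ``I expect one can force the first intersection of the two arcs of $F'$ to occur inside $\head(F)$.'' This is exactly the hard topological content of the whole proposition, and it is not at all routine: if the two arcs of $F'$ exit $\head(F)$ through \emph{different} sides ($\alpha$ and $\beta$ respectively) without first intersecting, one must rule this out using the hypothesis $q\in t(F)\cap t(F')$, and the case analysis is delicate (compare the effort spent in the paper's Steps~2--4 of the proof of Lemma~\ref{lemma:cycles}, which establish a related but weaker kind of control). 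Moreover, nesting is needed in a symmetric form to rule out two noses lying in each other's heads, which you implicitly rely on both for the injectivity of $F\mapsto \{r,s_F\}$ across distinct noses and for the existence of the ``outermost'' $s_F$ (without it, the union of the inner $h(F')$'s could a priori cover all of $h(F)$). Finally, even granting nesting, the acyclicity argument as stated is incomplete: directing edges $r\to s_F$ and noting that heads strictly shrink along a \emph{directed} walk does not preclude an \emph{undirected} cycle, which can alternate edge orientations. The cleaner finish is to show the assignment $F\mapsto s_F$ is injective into $C$ and that the nose of a fish with $\subseteq$-maximal head in $C$ is never some $s_F$, giving $|\mathcal{F}^C_q|\le|C|-1$ directly without invoking forests. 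In summary: the strategy is attractive and plausibly correct, but the nesting principle is a substantive unproved claim, and until it is established (together with the existence of the outermost choice it underpins) the proof is incomplete.
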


\begin{prop} \label{lemma:bigons}
Let $r \in Q$. If $\mathcal{A}$ contains $k_r$ arcs from $r$ to $p$, then $$k_r \leq |\chi| + \sum_{F \in \mathcal{F}^r} |t(F)|.$$
\end{prop}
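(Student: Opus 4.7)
The plan is to construct an injection from the $k_r$ arcs of $\mathcal{A}$ starting at $r$ into the ``slot set''
\[ T := (Q \setminus \{r\}) \;\sqcup\; \{(F,q) : F \in \mathcal{F}^r,\, q \in t(F)\}, \]
whose cardinality is exactly $|\chi| + \sum_{F\in\mathcal{F}^r}|t(F)|$. First I enumerate the arcs from $r$ counterclockwise around $r$ as $\alpha_1,\dots,\alpha_{k_r}$; each consecutive pair $(\alpha_i,\alpha_{i+1})$ is then either disjoint or forms a minimal fish $F_i\in\mathcal{F}^r$, and this dichotomy controls the local geometry at the cusp of $r$. The main inputs are minimal position together with the pairwise-at-most-once assumption, which forbid bigons and half-bigons; in particular $|h(F_i)|\geq 1$ for each minimal fish (else $h(F_i)$ is a half-bigon at $r$) and $|t(F_i)|\geq 1$ (else the tail of $F_i$ is a half-bigon at $p$).

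For each $i$, let $f_i$ denote the face of the arrangement of arcs of $\mathcal{A}$ from $r$ together with $\partial D$ that meets the cusp of $r$ between $\alpha_i$ and $\alpha_{i+1}$. If $f_i$ contains a puncture of $Q\setminus\{r\}$, then I assign $\alpha_i$ to such a puncture. Because the cusp faces $f_i$ are pairwise disjoint subsets of $D$ (they are different faces of the same planar arrangement), distinct arcs receive distinct punctures, and this portion of the assignment lands injectively in $Q\setminus\{r\}$. This handles the ``uninterfered'' case: if $(\alpha_i,\alpha_{i+1})$ is a disjoint pair whose wedge extends cleanly to $\partial D$, then the wedge must contain a puncture (otherwise $\alpha_i$ and $\alpha_{i+1}$ are homotopic, contradicting that $\mathcal{A}$ contains no repeated arcs), and if $(\alpha_i,\alpha_{i+1})=F_i$ is a minimal fish without third-arc interference, then $f_i=h(F_i)$ contains a puncture by the observation above.

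The delicate case is when $f_i$ is empty. Then at least one arc $\alpha_j$ (with $j\neq i,i+1$) must have entered the cusp sector by crossing $\alpha_i$ or $\alpha_{i+1}$, thereby subdividing it and producing a (possibly non-minimal) fish with one of them. Starting at the cusp of $r$ and walking along $\partial f_i$ outwards, I follow the interfering arc $\alpha_j$ and, inductively, any further interferences; via a canonical tracing rule I arrive at a minimal fish $F\in\mathcal{F}^r$ and a specific puncture $q\in t(F)$ that was ``displaced'' from the natural catchment of $f_i$. I then define $\phi(\alpha_i)=(F,q)\in T$.

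The main obstacle is verifying injectivity of this second assignment: a priori, two different arcs $\alpha_i,\alpha_{i'}$ with empty cusp faces could trace to the same $(F,q)$. I plan to fix a canonical tracing rule---for example, always turning in a prescribed direction when entering an interfering arc and always selecting the innermost such arc---and then to use planarity together with the pairwise-at-most-once intersection hypothesis, which sharply limits how many arcs can simultaneously occupy a given sub-region, to show that the trace records the starting cusp. The core combinatorial lemma is that this canonical trace yields an injective map from empty cusp faces into the $(F,q)$-slots of $T$; the case analysis of how interfering arcs sit relative to fish heads and tails, and the corresponding tie-breaking, is where the bulk of the technical work lies.
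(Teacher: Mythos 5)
Your plan is structurally sensible at the bookkeeping level (the slot set has the right cardinality, the non-empty cusp faces are pairwise disjoint, and a non-empty face yields a puncture not claimed by any other face), but you have left the entire content of the proposition unproved. The decisive step---defining a tracing rule for an empty cusp face and showing it lands injectively in the $(F,q)$-slots---is exactly where you write that you ``plan to'' choose a rule and that the ``bulk of the technical work lies.'' That is not a small gap to fill: handling how non-minimal fish created by interfering arcs collapse to minimal ones, how several empty faces can trace into the same minimal fish $F$, and how to then select distinct punctures $q\in t(F)$, is precisely the difficulty the proposition exists to overcome. As written, the proposal asserts the conclusion rather than deriving it.

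The paper sidesteps this combinatorial entanglement entirely by passing to the infinite cyclic cover $\overline A$ of the annulus $A$ (boundaries $r$ and $p$), lifting the $k_r$ arcs to $\overline\alpha_1,\dots,\overline\alpha_{k_r+1}$ (with $\overline\alpha_{k_r+1}$ a deck translate of $\overline\alpha_1$), and defining a homological invariant $d(\gamma,\delta)\in\Z$ counting punctures with sign between two lifted arcs. Additivity of $d$ gives the exact conservation law $\sum_{i} d(\overline\alpha_i,\overline\alpha_{i+1}) = d(\overline\alpha_1,\overline\alpha_{k_r+1}) = |\chi|$, while the local bound $d(\overline\alpha_i,\overline\alpha_{i+1})\geq 1-|t(F_i)|$ follows from $|h(F_i)|\geq 1$ (no half-bigon at $r$), and the proposition drops out. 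This is a genuinely different mechanism from your injection: instead of allocating arcs to disjoint slots, it establishes a signed count that telescopes, so overlaps and negative contributions are handled algebraically rather than case by case. I would recommend either carrying out your tracing lemma in full (expect it to be delicate) or switching to the covering-space argument, which gives the inequality with essentially no case analysis.
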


\begin{cor}\label{cor:bigons}
Under the hypotheses of Proposition \ref{lemma:bigons}, $$k_r \leq c_r + \sum_{F \in \mathcal{F}^r} |t(F)|.$$
\end{cor}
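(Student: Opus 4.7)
My plan is to deduce Corollary~\ref{cor:bigons} from Proposition~\ref{lemma:bigons} by applying the proposition not to $S$ but to a quotient sphere $S'$ in which each $\sim_r$-equivalence class inside $Q-\{r\}$ has been amalgamated into a single puncture. Since $S'$ would then have $c_r+2$ punctures, its Euler characteristic would have absolute value $c_r$, and the corollary would follow directly from Proposition~\ref{lemma:bigons} applied on $S'$ to the collection of arcs from $r$ to $p$.

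The first task is to construct, for each equivalence class $C_i\subset Q-\{r\}$ of $\sim_r$, a closed embedded disk $R_i\subset D$ with $R_i\cap Q=C_i$, $r\notin R_i$, and the $R_i$'s pairwise disjoint. The generating relations $s\sim_r s'$ inside $C_i$ come from $r$-fish $F$ with nose $s$ and $s'\in h(F)$; inside such a fish, the two arcs together with a short cusp segment near $s$ bound a subdisk of $\head(F)$ containing $\{s,s'\}$ but not $r$ (which lies in $\tail(F)$). The boundary $\partial R_i$ would be assembled by cutting and pasting these subdisks along a spanning tree of the $r$-fish realising the generators of $C_i$, and then simplified into a single simple closed curve.

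Next I would homotope the $k_r$ arcs of $\mathcal{A}$ from $r$ to $p$, keeping their endpoints fixed, so that they are disjoint from the interiors of all the $R_i$'s. This should be achievable because $r$ lies outside every $R_i$ and because each arc from $r$ meets any bounding fish-arc in at most one point, leaving enough room to push it off. After the homotopy the arcs descend to $S'$, and I would verify that they remain pairwise non-homotopic (each $R_i$ contains a single $\sim_r$-class, so no collapse identifies two distinct homotopy classes of arcs from $r$) and pairwise intersecting at most once. Applying Proposition~\ref{lemma:bigons} on $S'$ then gives
$$k_r\leq c_r+\sum_{F\in\mathcal{F}^r(S')}|t_{S'}(F)|.$$
Since every minimal fish with nose $r$ in $S'$ lifts to a minimal fish with nose $r$ in $S$ whose tail has at least as many punctures (collapsing can only merge), one has $|t_{S'}(F)|\leq|t(F)|$, and the corollary follows.

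The hardest step will be the construction of the $R_i$'s: the heads of the various $r$-fish realising the generators of a single class $C_i$ can interleave and overlap in intricate ways, and showing that they can be amalgamated into a single disk with a simple boundary, disjoint from $r$ and from the other $R_j$'s, will likely require an inductive argument along a spanning tree of the generating relations in $C_i$, together with some cut-and-paste to clean up $\partial R_i$ into a simple closed curve while preserving the property that no arc of $\mathcal{A}$ from $r$ is forced to enter the interior.
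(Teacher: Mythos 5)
Your overall strategy matches the paper's: replace $D$ by a sphere with at most $c_r+2$ punctures obtained by amalgamating the $\sim_r$-equivalence classes, apply Proposition~\ref{lemma:bigons} there, and note that tail counts can only decrease under the collapse. The difference is in the implementation, and that difference hides genuine gaps.

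The paper invokes Lemma~\ref{lemma:tails} with $q=r$: no arc from $r$ enters $\head(F)$ for any $r$-fish $F$. It then simply removes $\head(F)$ from $D$ for every $r$-fish $F$ and takes the component $D'$ containing $\partial D$. Because the arcs from $r$ already avoid everything that was removed, they sit in $D'$ with no homotopy at all, so there is no danger of creating new intersections or identifications among them. That the resulting boundary components match $\sim_r$-classes follows from Lemma~\ref{lem:tailsdisjoint}: two intersecting heads force one nose into the other's head, which places the punctures in the same class, so heads from distinct classes are disjoint; hence $|\chi(D')|\le c_r$.

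Your version needs two further things that you flag but do not supply. First, you must build pairwise disjoint \emph{embedded} disks $R_i$ by cutting and pasting head-subdisks along a spanning tree and ``simplifying'' into a simple boundary curve -- you yourself call this the hardest step, and it is exactly the combinatorial mess the paper's removal-of-heads sidesteps. Second, you need a simultaneous homotopy of all $k_r$ arcs off the interiors of the $R_i$'s that preserves pairwise at-most-one-intersection and pairwise non-homotopy; this is asserted, not argued. Moreover, the justification you give for pushability (``each arc from $r$ meets any bounding fish-arc in at most one point'') is too weak: a single crossing can carry an arc from $r$ into a head, and only Lemma~\ref{lemma:tails} -- which you never invoke -- rules that out. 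If you replace the disk construction by removal of heads and cite Lemma~\ref{lemma:tails}, the argument closes; as written it does not.
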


To prove this corollary, we first need the following lemma.

\begin{lemma} \label{lemma:tails}
For each $q$-fish $F$, no arc in $\mathcal A$ from $q$ to $p$ may pass through the region $\head(F)$.
\end{lemma}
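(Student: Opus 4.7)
The plan is a proof by contradiction. Let $F = (\alpha, \beta)$ be a $q$-fish with nose $r$; since $\alpha,\beta \in \mathcal{A}$ intersect and any two arcs in $\mathcal{A}$ cross at most once, they meet at a unique point $x$. Suppose that $\gamma \in \mathcal{A}$ runs from $q$ to $p$ and has nonempty intersection with $\head(F)$. After a standard small perturbation of $\gamma$ that preserves the geometric intersection numbers with every other arc, I may assume $\gamma$ is transverse to $\alpha \cup \beta$ and avoids the triple point $x$.

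The topological picture is the following. The arcs $\alpha$ and $\beta$ split at $x$ into four subarcs, which I denote $\alpha_1, \beta_1$ (from $r$ to $x$) and $\alpha_2, \beta_2$ (from $x$ to $\partial D$). Then $\head(F)$ is the bigon bounded by $\alpha_1 \cup \beta_1$, while $\tail(F)$ is bounded by $\alpha_2 \cup \beta_2$ together with a subarc of $\partial D$. In particular $\head(F)$ does not touch $\partial D$, and $\overline{\head(F)} \cap \overline{\tail(F)} = \{x\}$: the two regions are separated by the outside region.

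The crossing count then yields the contradiction. Because $\gamma$ enters $\head(F)$ but ends on $\partial D$ (outside $\head(F)$), it must both enter and leave $\head(F)$, contributing at least two points to $\gamma \cap (\alpha_1 \cup \beta_1)$. Moreover, $\gamma$ starts at $q \in \tail(F)$, and since $\tail(F)$ and $\head(F)$ are non-adjacent, $\gamma$ must first leave $\tail(F)$ by crossing $\alpha_2$ or $\beta_2$, contributing at least one additional point to $\gamma \cap (\alpha_2 \cup \beta_2)$. Since $\gamma$ avoids $x$, these three points are distinct, so $|\gamma \cap (\alpha \cup \beta)| \geq 3$. But the hypothesis of Theorem~\ref{thm:once} gives $|\gamma \cap (\alpha \cup \beta)| = |\gamma \cap \alpha| + |\gamma \cap \beta| \leq 2$, the desired contradiction.

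The main (and essentially only) nontrivial step is the topological observation that $\head(F)$ and $\tail(F)$ share only the point $x$; once that is in place, the rest is a three-term crossing count. I do not anticipate a serious obstacle.
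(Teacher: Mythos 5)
Your proof is correct and is essentially the same three-crossing count as the paper's: exit the tail (one crossing of $\alpha_2\cup\beta_2$), enter and then exit the head (two crossings of $\alpha_1\cup\beta_1$), for a total of at least three intersections with $\alpha\cup\beta$, which is impossible when $|\gamma\cap\alpha|,|\gamma\cap\beta|\leq 1$. The only difference in presentation is how the triple point $x$ is handled: you perturb $\gamma$ off $x$ (which is fine, since pushing $\gamma$ off a transverse triple point leaves $|\gamma\cap\alpha|$ and $|\gamma\cap\beta|$ unchanged), whereas the paper keeps $\gamma$ fixed and observes that passing through $x$ already contributes one intersection with each of $\alpha$ and $\beta$, plus one more on exit.
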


\begin{proof}
Let $F = (\alpha, \beta)$, and let $\gamma \in \mathcal{A}$ be an arc from $q$ to $p$. Observe that $q \in \tail(F)$ by hypothesis. For the arc $\gamma$ to pass through $\head(F)$, it must first exit the tail, then enter the head, and finally exit the head, before arriving at $p$. This would cause a total of three intersections between $\gamma$ and either $\alpha$ or $\beta$, which is a contradiction. Note that even if $\gamma$ passes directly through the intersection point $\alpha \cap \beta$ from $\tail(F)$ to $\head(F)$, we still have two intersections there, and one more upon leaving $\head(F)$.
\end{proof}

\begin{proof}[Proof of Corollary~\ref{cor:bigons}]
By Lemma~\ref{lemma:tails} for $q=r$, no arcs from $r$ can pass through $\head(F)$ for any $r$-fish $F$. Consider the modified disc $D'$ obtained in the following way.  Remove $\head(F)$ from $D$ for each $r$-fish $F$, and let $D'$ be the connected component of the resulting surface which contains the boundary~$p$. From the definition of $\sim_r$ we see that this $D'$ will be homeomorphic to an at most $(c_r+2)$-punctured sphere: one way to see this is to observe that the punctures of each equivalence class of $\sim_r$ become a single puncture in $D'$, and the punctures $r$ and~$p$ each remain as well, giving at most $c_r+2$ total punctures. From Proposition~\ref{lemma:bigons}, it follows that $$k_r \leq |\chi(D')| + \sum_{F \in \mathcal{F}^r} |t(F)| \leq c_r + \sum_{F \in \mathcal{F}^r} |t(F)|.$$
\end{proof}

\begin{proof}[Proof of Theorem \ref{thm:once}]

We begin by summing the inequalities obtained in Proposition \ref{lemma:classes} and Corollary \ref{cor:bigons} over $Q$. Recall that $|Q| = |\chi| + 1$.  From Proposition \ref{lemma:classes} we get
$$|\chi|(|\chi|+1) = \sum_{q \in Q} |\chi| \geq \sum_{q \in Q} |\mathcal{F}_q| + \sum_{q \in Q} c_q.$$
 From Corollary \ref{cor:bigons} we get
$$|\mathcal{A}| \leq \sum_{r \in Q} c_r + \sum_{r \in Q} \sum_{F \in \mathcal{F}^r} |t(F)|.$$
Putting these together we obtain
\begin{align*}
|\chi|(|\chi|+1) \geq
	\sum_{q \in Q} |\mathcal{F}_q|
	- \sum_{r \in Q} \sum_{F \in \mathcal{F}^r} |t(F)|
	+ |\mathcal{A}|.
\end{align*}
Now observe that the first two terms on the right hand side are actually counting the same objects. The first term is counting for each puncture~$q$, the number of minimal fish with $q$ in their tail. The second term is counting for each minimal fish, the number of punctures in that fish's tail. These two values therefore cancel out, and we obtain $$|\chi|(|\chi|+1) \geq |\mathcal{A}|.\qedhere$$\end{proof}

\section{Equivalence Classes} \label{section:classes}

\subsection{Proof of Proposition~\ref{lemma:classes}}
We keep the notation from Section~4. The first supporting lemma has technical proof postponed to the next subsection.

\begin{lemma} \label{lemma:cycles}
Let $q \in Q$. Then there exists a puncture $v \in Q$, $v \neq q$ such that $v$ is not the nose of a $q$-fish.
\end{lemma}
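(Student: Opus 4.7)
The plan is to argue by contradiction. Suppose that every puncture $v \in Q - \{q\}$ is the nose of some $q$-fish, and for each such $v$ fix a $q$-fish $F_v = (\alpha_v, \beta_v)$ with nose $v$. The strategy is to find an extremal $q$-fish and derive a contradiction. I would let $F^*$ be a $q$-fish whose tail $t(F^*)$ is inclusion-minimal among the tails of all $q$-fish; such an $F^*$ exists since $\mathcal{A}$ is finite. Write $v^*$ for its nose and $\alpha^*, \beta^*$ for its two arcs, meeting at a point $x^*$.

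Suppose first that there exists a puncture $u \in t(F^*) \cap Q$ with $u \neq q$. By hypothesis, $u$ is the nose of a $q$-fish $F_u = (\gamma, \delta)$, whose arcs begin at $u \in t(F^*)$ and end at $p$. The aim is to exhibit a $q$-fish whose tail is strictly contained in $t(F^*)$, contradicting minimality. If both $\gamma$ and $\delta$ remain inside $t(F^*)$, then $t(F_u) \subseteq t(F^*)$; since $u \in t(F^*) \setminus t(F_u)$, the containment is strict and we are done. The difficulty lies in the case where $\gamma$ or $\delta$ exits $t(F^*)$ through $\alpha^*$ or $\beta^*$. Since every two arcs in $\mathcal{A}$ meet at most once, each of $\gamma, \delta$ crosses $\alpha^*$ at most once and $\beta^*$ at most once, so the possible exit-and-return patterns are restricted. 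By a case analysis on whether each arc stays inside $t(F^*)$, exits once, or exits and re-enters, one can splice segments of $\gamma, \delta$ with segments of $\alpha^*$ or $\beta^*$ to produce two arcs contained in $t(F^*)$ that cobound a $q$-fish $F'$ with $t(F') \subsetneq t(F^*)$. The no-bigon (minimal position) hypothesis would be used to ensure the spliced objects actually realise arcs of $\mathcal{A}$, or more precisely that the resulting region really is a smaller tail containing $q$.

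If instead $t(F^*) \cap Q = \{q\}$, so no other puncture lies in $t(F^*)$, the above argument does not apply, and a separate analysis is required. Here I would look at a puncture $w$ in $h(F^*)$ or in the outside region of $F^*$, which is (by hypothesis) the nose of some $q$-fish $F_w$. Since $q \in t(F^*)$ lies on the opposite side of $\alpha^* \cup \beta^*$ from $w$, the arcs of $F_w$ must cross $\alpha^* \cup \beta^*$ to enclose $q$, and the intersection-one constraint again bounds the crossing pattern. Tracing these crossings carefully should either produce a $q$-fish with tail strictly inside $t(F^*)$ (contradicting minimality) or violate the at-most-one-intersection hypothesis directly.

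The main obstacle is the case analysis when arcs of $F_u$ (or $F_w$) exit $t(F^*)$: one must rigorously verify that a suitable splicing of arcs of $F_u$ with portions of $\alpha^*, \beta^*$ defines a genuine $q$-fish, and that its tail is strictly smaller. This is the technical core of the proof and will rely crucially on the fact that all arcs are in minimal position and pairwise intersect at most once.
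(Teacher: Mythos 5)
Your approach (extremal fish with inclusion-minimal tail, then descent) is a natural first idea, but it has a genuine gap, and it is quite different from the paper's proof, which instead assumes every $v\neq q$ is the nose of some $q$-fish, extracts a minimal-length \emph{cycle} of fish $F_1,\dotsc,F_k$ with $q_{i+1}\in h(F_i)$, builds an embedded polygon $P$ from arcs $a_i\subset\head(F_i)$, and then shows via the auxiliary arcs $g_i$ and regions $R_i$ that $q$ can lie neither outside nor inside $P$.

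The gap in your argument is the descent step itself. Your Case 1 requires, given $u\in t(F^*)\setminus\{q\}$ and a $q$-fish $F_u$ with nose $u$, to produce a $q$-fish whose tail is strictly contained in $t(F^*)$. But there is no reason $t(F_u)\subseteq t(F^*)$: the arcs of $F_u$ start at $u\in\tail(F^*)$ and end at $p$, and each may cross $\alpha^*$ and $\beta^*$ once, so $\head(F_u)$ can protrude out of $\tail(F^*)$ and $\tail(F_u)$ can contain punctures that $\tail(F^*)$ does not. In fact, Lemma~\ref{lem:tailsdisjoint} tells you that when the heads intersect, $v^*\in h(F_u)$, which is exactly the configuration where $F_u$'s tail picks up punctures outside $\tail(F^*)$; the two tails then become incomparable and minimality of $t(F^*)$ gives nothing. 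This is precisely the ``cyclic'' obstruction ($q_{i+1}\in h(F_i)$ all the way around) that the paper's polygon construction is built to handle, and a two-fish extremal argument does not see it. Your proposed fix --- splicing segments of $\gamma,\delta$ with segments of $\alpha^*,\beta^*$ to manufacture a smaller-tail fish --- does not work either: a fish is by definition a pair of arcs \emph{in} $\mathcal{A}$, and a spliced path is not an arc of $\mathcal{A}$ (nor even homotopic to one in general), so ``the resulting region really is a smaller tail'' is not the right thing to check. Case 2 ($t(F^*)\cap Q=\{q\}$) is left entirely as a hope (``tracing these crossings carefully should...''), and it is in fact the hard case: a fish whose tail contains only $q$ can coexist with a whole cycle of other fish, and ruling this out is essentially the content of Steps~1--5 and the final contradiction in the paper's proof.
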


\begin{lemma}
\label{lem:tailsdisjoint}
Let $q \in Q$. If $F,F'$ are $q$-fish with intersecting heads, then one of $F,F'$ has nose lying in the head of the other.
\end{lemma}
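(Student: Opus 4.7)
The plan is to argue by contradiction: assume $\head(F)\cap\head(F')\ne\emptyset$ yet neither nose lies in the other head. Take a connected component $R$ of $\head(F)\cap\head(F')$. Its boundary $\partial R$ is a simple closed curve whose pieces alternate between subarcs of $\partial\head(F)$ and subarcs of $\partial\head(F')$, meeting at corners that are intersections of one $F$-arc with one $F'$-arc. Since each pair of arcs from different fish meets at most once, $\partial R$ has an even number of corners, at most four, and I would case-split on this number.

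In the zero-corner case, $R$ equals one whole head, so one head is contained in the other; then $r$, a puncture on $\partial\head(F)$, lies in $\overline{\head(F')}$. Assuming $r\ne r'$ (the same-nose case can be handled analogously) and noting that $r$ does not lie on the interior of any arc, we get $r\in\head(F')$, a contradiction. In the two-corner case, the two boundary pieces of $\partial R$ lie on a single $F$-arc and a single $F'$-arc, so both corners must coincide with their unique intersection point — impossible.

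The main obstacle is the four-corner case, where each of $\alpha\cap\alpha'$, $\alpha\cap\beta'$, $\beta\cap\alpha'$, $\beta\cap\beta'$ is realised and all four intersection points lie on head-side subarcs (those from the noses to the crossings $x$ or $x'$). This exhausts all intersections between $F$- and $F'$-arcs, so the tail-side subarcs (from $x$ or $x'$ to $p$) are pairwise disjoint from the opposite fish's arcs. I would first rule out $r'\in\tail(F)$: the arc $\alpha'$ starting in $\tail(F)$ has no crossings with the tail-side subarcs of $\alpha$ or $\beta$, so it remains in $\overline{\tail(F)}$ all the way to $p$, contradicting that it passes through $\alpha\cap\alpha'$ on the head-side of $\alpha$, which lies outside $\overline{\tail(F)}$. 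Hence $r'$ lies in the third region of $F$ (adjacent to both $r$ and $p$), and symmetrically $r$ lies in the third region of $F'$.

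The $q$-fish hypothesis then delivers the contradiction. Since the tail-side subarcs of $\alpha',\beta'$ avoid $\alpha\cup\beta$ and start at $x'$ in the third region of $F$, they stay in the closure of that region and limit to $\partial D$ within the arc of $\partial D$ bounding this region. Tracing the orientation of $F'$ shows that the $\partial D$-arc bounding $\tail(F')$ is the short arc between these limit-points, again inside the said arc. Thus $\tail(F')$ is enclosed by a simple closed curve contained in the closure of the third region of $F$ and not surrounding the tail-side subarcs of $\alpha$ or $\beta$, forcing $\tail(F')$ to lie in the third region of $F$. But this region is disjoint from $\tail(F)$, contradicting $q\in\tail(F)\cap\tail(F')$.
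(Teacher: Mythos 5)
Your decomposition (analysing a connected component $R$ of $\head(F)\cap\head(F')$ and counting corners) is different from the paper's, which cases directly on whether the self-intersection points $x=\alpha\cap\beta$ and $y=\alpha'\cap\beta'$ lie in the head of the opposite fish. Unfortunately your version has a genuine gap precisely in the situation the paper isolates as its second and third cases.

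The problem is that $\partial R$ need not have the structure you assert. The point $x$ lies on $\partial\head(F)$, so if $x\in\head(F')$ then $x$ is a boundary point of $\head(F)\cap\head(F')$ and hence lies on $\partial R$ for some component $R$. At such a point $\partial R$ turns a corner where both incident pieces belong to $\partial\head(F)$ (one on $\alpha$, one on $\beta$), so the pieces do not alternate between the two heads and the corner is not an $F$--$F'$ intersection. Equivalently, if you choose to regard the whole connected stretch of $\partial\head(F)$ through $x$ as a single ``$F$-piece'', then that piece sits on \emph{two} different $F$-arcs, which invalidates the step in your two-corner analysis where you conclude ``the two boundary pieces of $\partial R$ lie on a single $F$-arc and a single $F'$-arc, so both corners must coincide.'' Either way, the configuration $x\in\head(F')$ (or symmetrically $y\in\head(F)$) escapes your case split. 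This configuration is consistent with the standing hypotheses: if $x\in\head(F')$ while $r\notin\head(F')$, then $\alpha$ and $\beta$ each cross $\partial\head(F')$ once entering and once leaving, realising all four intersections $\alpha\cap\alpha'$, $\alpha\cap\beta'$, $\beta\cap\alpha'$, $\beta\cap\beta'$, but with two of them on tail-side subarcs of $\alpha,\beta$ — so it is not your four-corner case, and it is not covered by your zero- or two-corner arguments either.

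To repair the proof you would need to show that this situation still forces $\tail(F)\cap\tail(F')=\emptyset$. The paper does exactly this: if, say, $x\in\head(F')$ and $y\notin\head(F)$, then all four intersections lie inside $\head(F')$, so the tail-side subarcs of $\alpha',\beta'$ are disjoint from $\alpha\cup\beta$, and since $y\notin\tail(F)$, one sees that $\tail(F')$ misses $\tail(F)$; and the paper rules out $x\in\head(F')$ together with $y\in\head(F)$ by an intersection count. Your treatment of what you call the four-corner case is a sound and rather more detailed version of the paper's first case, but without handling the two situations above, the proof is incomplete.
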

\begin{proof} Suppose by contradiction that the nose of each of $F,F'$ is outside the head of the other.
Let $x=\head(F)\cap \tail(F)$ and $y=\head(F')\cap\tail(F')$. If $x\notin \head(F')$ and $y\notin \head(F)$, then we have the configuration from Figure~\ref{fig:crossing-fish}. Since the figure already accounts for the intersections of all the arcs involved, the tails of $F,F'$ are disjoint, which is a contradiction.

If exactly one of $x,y$, say $x$, lies in the head of the other fish $F'$, then all the intersections between the arcs involved lie in $\head(F')$. Moreover, $y$ lies outside $\tail(F)\cap \head (F')$. Hence again the tails of $F,F'$ are disjoint, contradiction.

Since the arcs in $\mathcal A$ pairwise intersect at most once, we cannot have $x\in \head(F')$ and $y\in \head(F)$ (even if we allowed $F,F'$ to have disjoint tails).
\end{proof}

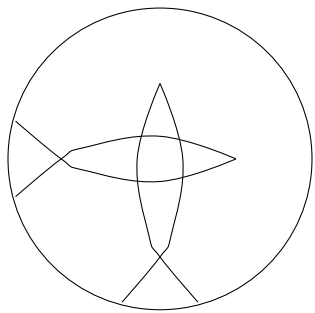
\begin{figure}
\begin{tikzpicture}
\draw (2,2) circle (2cm);

\draw plot [smooth, tension=0.7] coordinates {(2,3) (1.7,2) (1.85,1) (2,0.7) (2.5,0.1)};
\draw plot [smooth, tension=0.7] coordinates {(2,3) (2.3,2) (2.15,1) (2,0.7) (1.5,0.1)};

\draw plot [smooth, tension=0.7] coordinates {(3,2) (2,1.7) (1,1.85) (0.7,2) (0.1,2.5)};
\draw plot [smooth, tension=0.7] coordinates {(3,2) (2,2.3) (1,2.15) (0.7,2) (0.1,1.5)};

\end{tikzpicture}
\caption{The four intersections caused by the intersection of the heads prevent the tails of these fish from intersecting.} \label{fig:crossing-fish}
\end{figure}

To prove Proposition~\ref{lemma:classes} we argue by induction. First consider as the base case the three-punctured sphere where ${Q = \{q,r\}}$. Then the only equivalence class of $\sim_q$ is $\{r\}$ and there are clearly no fish possible, so $|\chi| = 1 = |\mathcal{F}_q| + c_q$, as desired.

Now consider the case $|\chi| \geq 2$. By Lemma~\ref{lemma:cycles}, there exists a puncture $v$ other than $q$ which is not the nose of a $q$-fish. To apply induction, define $\widetilde D$ to be $D$ with the puncture $v$ removed. Let $\tilde p=\partial \widetilde D$.
For each arc $\alpha\in \mathcal A$ that does not start at $v$, let $\tilde \alpha$ denote the arc that is the image of $\alpha$ under the embedding $D\subset \widetilde D$. Let $\widetilde{\mathcal{A}}$ be the family of all such~$\tilde \alpha$, possibly after a homotopy putting all of them pairwise in minimal position.

For a puncture $q$ of $D$ distinct from $v$, we denote by $\tilde q$ its image in~$\widetilde D$. Let $\widetilde Q$ be the set of all such $\tilde q$. By $\mathcal{F}_{\tilde q}$ we denote the set of minimal $\tilde q$-fish on $\widetilde D$ formed by the family~$\widetilde{\mathcal{A}}$. Let $\sim_{\tilde q}$ be the equivalence relation on the punctures of $\widetilde D$
generated by $\tilde r \sim_{\tilde q} \tilde s$ whenever there is a $\tilde q$-fish $\widetilde F$ formed by the family $\widetilde{\mathcal A}$, with nose $\tilde r$ such that $\tilde s \in h(\widetilde F)$. By $c_{\tilde q}$ we denote the number of equivalence classes of $\sim_{\tilde q}$. For the induction step it suffices to show that $|\mathcal{F}_{\tilde q}| + c_{\tilde q} \geq |\mathcal{F}_q| + c_q - 1$.

\begin{rmk}
\label{rem:refinement}
Observe that if for $q,r,s\neq v$ we have $r\not\sim_q s$, then certainly $\tilde r \not\sim_{\tilde q} \tilde s$, as the removal of $v$ does not introduce any new fish. In other words, under the obvious identification of $Q-\{v\}$ with $\widetilde{Q}$, the relation $\sim_{\tilde q}$ is a refinement of the restriction of $\sim_{q}$ to $Q-\{v\}$.
\end{rmk}

\begin{defn} Let $F=(\alpha, \beta)$ be a fish formed by arcs $\alpha,\beta\in \mathcal A$. We say that $F$ {\em vanishes} if $\tilde \alpha$ and $\tilde \beta$ are no longer in minimal position (they bound a half-bigon). Equivalently, $v$ is either the only tail puncture or the only head puncture of $F$.
\end{defn}

Note that if $F$ does not vanish, then $(\tilde \alpha,\tilde \beta)$ is a fish on $\widetilde D$ formed by the family~$\widetilde{\mathcal{A}}$. If $F$ is minimal, the fish $(\tilde \alpha,\tilde \beta)$ does not have to be minimal, since the arcs in $\widetilde{\mathcal{A}}$ might have been homotoped and their order around the nose might have changed. However, we have the following lemma, whose proof is also postponed to a separate subsection.

\begin{lemma}\label{lemma:passing}
For $q\neq v$, let $\mathcal{F}_q'$ be the set of non-vanishing minimal $q$-fish formed by the family $\mathcal{A}$. Then there is a one-to-one map $\varphi : \mathcal{F}_q' \to \mathcal{F}_{\tilde q}$.
\end{lemma}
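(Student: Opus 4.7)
The plan is to send each $F=(\alpha,\beta)\in \mathcal{F}_q'$ with nose $r$ to a canonical minimal $\tilde q$-fish with nose $\tilde r$ inscribed in the (a priori non-minimal) $\tilde q$-fish $\widetilde F:=(\tilde\alpha,\tilde\beta)$ that the excerpt has already produced on $\widetilde D$. Note that $r\neq v$, since $v$ was chosen (via Lemma~\ref{lemma:cycles}) not to be the nose of any $q$-fish, so $\tilde r$ is a well-defined puncture of $\widetilde D$.

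For the construction of $\varphi(F)$, I would enumerate in cyclic order the arcs of $\widetilde{\mathcal A}$ issuing from $\tilde r$ whose initial cusp sector lies in the counterclockwise cusp sector at $\tilde r$ defining $\head(\widetilde F)$, as $\tilde\alpha=\tilde\delta_0,\tilde\delta_1,\ldots,\tilde\delta_n=\tilde\beta$. These arcs subdivide the region $\head(\widetilde F)\cup \widetilde F\cup \tail(\widetilde F)$ into subregions, each bounded by a consecutive pair $(\tilde\delta_i,\tilde\delta_{i+1})$. The puncture $\tilde q\in t(\widetilde F)$ lies in a unique such subregion. A short check using minimal position of $\widetilde{\mathcal A}$ shows that the corresponding pair must intersect, so it forms a minimal $\tilde q$-fish, and I set $\varphi(F):=(\tilde\delta_i,\tilde\delta_{i+1})\in \mathcal{F}_{\tilde q}$.

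For injectivity, suppose $\varphi(F)=\varphi(G)=:(\tilde\gamma,\tilde\delta)$ for $F,G\in \mathcal{F}_q'$. This common image has well-defined nose $\tilde r$, and under the identification $Q-\{v\}\leftrightarrow \widetilde Q$ this forces $F$ and $G$ to share the same nose $r$ on $D$. I would then recover $F$ from $\varphi(F)$ by identifying $(\alpha,\beta)$ as the unique consecutive pair at $r$ in the cyclic order of $\mathcal A$ whose tilde images bracket $(\tilde\gamma,\tilde\delta)$ in the cyclic order at $\tilde r$ in $\widetilde{\mathcal A}$ and such that $q\in t(F)$ on $D$. Since this recovery is deterministic, we conclude $F=G$.

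The main obstacle will be controlling how filling in $v$ reshapes the local configuration at $r$: arcs in $\mathcal A$ starting at $r$ may change their starting direction on $\widetilde D$, their cyclic order at $r$ may be permuted, and two distinct arcs of $\mathcal A$ may even become homotopic. The technical heart of the proof is showing that, despite these alterations, two distinct minimal $q$-fish $F\neq G$ at the same nose $r$ always give rise to distinct inscribed minimal $\tilde q$-fish. The choice of $v$ as a puncture that is not the nose of any $q$-fish (via Lemma~\ref{lemma:cycles}) is crucial here: it is precisely what rules out the local rearrangements at $r$ that would otherwise collapse two distinct minimal $q$-fish on $D$ into the same inscribed sub-fish on $\widetilde D$.
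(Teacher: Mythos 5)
The forward construction of $\varphi(F)$ is essentially the same idea as the paper's: locate, inside the possibly non-minimal fish $(\tilde\alpha,\tilde\beta)$ on $\widetilde D$, a minimal $\tilde q$-fish at $\tilde r$ sandwiched between $\tilde\alpha$ and $\tilde\beta$. (The paper does this iteratively, shrinking the head one arc at a time; your ``subdivide and pick the region containing $\tilde q$'' version should produce the same object, but the ``short check'' that the consecutive pair near $\tilde q$ actually intersects, rather than bounding a lune from $\tilde r$ to $\tilde p$, is not as immediate as you suggest and in the paper is handled by always keeping $\tilde q$ in the tail as you shrink.)

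The genuine gap is in the injectivity argument, and you partly flag it yourself. Recovering $(\alpha,\beta)$ from $\varphi(F)$ by asking for ``the unique consecutive pair at $r$ whose tilde images bracket $(\tilde\gamma,\tilde\delta)$'' presupposes that the cyclic order of the four arcs $\alpha,\beta,\gamma,\delta$ about $r$ survives the passage to $\widetilde D$ after re-homotoping to minimal position. This is exactly the paper's Lemma~\ref{lemma:order-fixed} together with Corollary~\ref{lemma:vanishing}, and it is not trivial: the hypothesis needed is that no fish formed by the four arcs $\alpha,\beta,\gamma,\delta$ has head containing only $v$. You do not establish this, and moreover you misattribute the mechanism. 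You write that the order preservation is a consequence of $v$ not being the nose of any $q$-fish (Lemma~\ref{lemma:cycles}), but that fact is used only to guarantee $r\neq v$, i.e.\ that $\varphi$ is even well defined. What actually forbids the reshuffling at $\tilde r$ is the \emph{non-vanishing} hypothesis on $F$ and $G$ (built into the definition of $\mathcal F'_q$) combined with a lift to the infinite cyclic cover of the annulus: there one shows that the ``cross'' fish $(\beta,\gamma)$ must have $q$ in its head, hence $h(\beta,\gamma)\neq\{v\}$, and similarly for the other pairs, so Lemma~\ref{lemma:order-fixed} applies. Without that cover argument (or some substitute for it) the uniqueness of the pre-image, and hence injectivity, does not follow.
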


The proof of Proposition~\ref{lemma:classes} splits into two cases:

\begin{enumerate}[(i)]
\item $\{v\}$ is an equivalence class of~$\sim_q$,
\item $\{v\}$ is not an equivalence class of~$\sim_q$.
\end{enumerate}

Let us first consider case (i). Since $\{v\}$ is an equivalence class of~$\sim_q$, by Remark~\ref{rem:refinement} we have $c_{\tilde q} \geq c_q - 1$.
Because $\{v\}$ is an equivalence class of $\sim_q$, we know that $v$ is not in the head of any $q$-fish. Furthermore, $v$ cannot be the only puncture in the tail of a $q$-fish, as every $q$-fish contains $q$ in its tail by definition. So no $q$-fish vanishes. Consequently, $|\mathcal{F}_q| = |\mathcal{F}'_q|$. By Lemma~\ref{lemma:passing}, we have $|\mathcal{F}'_q| \leq |\mathcal{F}_{\tilde q}|$.  Hence, $|\mathcal{F}_{\tilde q}| + c_{\tilde q} \geq |\mathcal{F}_q| + c_q - 1$ as required.

Now we consider case (ii). Since $\{v\}$ is not an equivalence class of~$\sim_q$, by Remark~\ref{rem:refinement} we have $c_{\tilde q} \geq c_q$.
Thus to prove Proposition~\ref{lemma:classes} it remains to prove that $|\mathcal{F}_{\tilde q}| \geq |\mathcal{F}_q| - 1$. This will follow from Lemma~\ref{lemma:passing} once we show that at most one minimal $q$-fish vanishes.

Suppose for contradiction that there exist two distinct vanishing minimal $q$-fish $F$ and $F'$. Let $F = (\alpha, \beta)$ and $F' = (\gamma, \delta)$. Then  $\{v\} = h(F) = h(F')$.

We consider two possibilities.

\begin{enumerate}[(a)]
\item $F$ and $F'$ have distinct noses,
\item $F$ and $F'$ have a common nose $r$.
\end{enumerate}

We first consider possibility (a). The nose of one fish cannot be contained in the head of the other, as each head contains only the puncture $v$.
This contradicts Lemma~\ref{lem:tailsdisjoint}.

It remains to consider possibility (b).

\begin{rmk}
\label{rmk:not_three}
Note that two distinct minimal fish whose tails intersect cannot be formed with only three arcs (one shared between them); three arcs forming two minimal fish $(\alpha, \beta)$ and $(\beta, \gamma)$ will always have disjoint tails (see Figure~\ref{fig:3-arcs}).
\end{rmk}

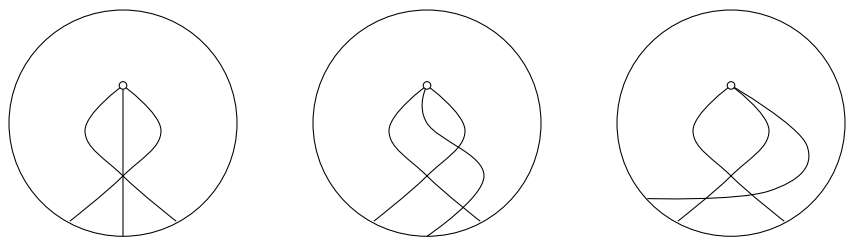
\begin{figure}[h!]
\begin{tikzpicture}
\draw (1.5,1.5) circle (1.5cm);
\draw (5.5,1.5) circle (1.5cm);
\draw (9.5,1.5) circle (1.5cm);

\draw plot [smooth, tension=0.7] coordinates {(1.5,2) (2,1.4) (1.5,0.8) (0.8,0.2)};
\draw plot [smooth, tension=0.7] coordinates {(1.5,2) (1,1.4) (1.5,0.8) (2.2,0.2)};
\draw (1.5,2) -- (1.5,0);

\draw plot [smooth, tension=0.7] coordinates {(5.5,2) (6,1.4) (5.5,0.8) (4.8,0.2)};
\draw plot [smooth, tension=0.7] coordinates {(5.5,2) (5,1.4) (5.5,0.8) (6.2,0.2)};
\draw plot [smooth, tension=0.7] coordinates {(5.5,2) (5.5,1.5) (6.25,0.8) (5.5,0)};

\draw plot [smooth, tension=0.7] coordinates {(9.5,2) (10,1.4) (9.5,0.8) (8.8,0.2)};
\draw plot [smooth, tension=0.7] coordinates {(9.5,2) (9,1.4) (9.5,0.8) (10.2,0.2)};
\draw plot [smooth, tension=0.7] coordinates {(9.5,2) (10.5,1.2) (10,0.6) (8.4,0.5)};

\draw[fill=white] (1.5,2) circle (0.05cm);
\draw[fill=white] (5.5,2) circle (0.05cm);
\draw[fill=white] (9.5,2) circle (0.05cm);
\end{tikzpicture}
\caption{If three consecutive arcs form two minimal fish, those fish have disjoint tails.}\label{fig:3-arcs}
\end{figure}

By Remark~\ref{rmk:not_three} in possibility (b) we can assume that $\alpha, \beta, \gamma, \delta$ are all distinct. Moreover, since $F,F'$ are minimal, the heads of $F,F'$ are disjoint in a neighbourhood of their common nose $r$. We can thus add a puncture $r'$ close to $r$ and move the head of $F'$ to have nose $r'$ without creating new intersections. This reduces possibility (b) to possibility (a), and finishes case (ii) and the entire proof of Proposition~\ref{lemma:classes}.

\subsection{Proof of Lemma~\ref{lemma:cycles}}

We start with a rough outline of the proof. If the conclusion of the lemma fails, there is a cycle of $q$-fish $F_i$, where the head of $F_i$ contains the nose of $F_{i+1}$. In Step~1 of the proof we will inscribe in this fish cycle an embedded polygon $P$ formed of the arcs $a_i$ in each $\head(F_i)$. In Steps~2 and~3 we will gain control on how the arcs $g_i$ forming $F_i$ exit $P$. Some of these arcs form regions $R_i$, where $q$ cannot lie (this will be shown in the final step of the proof). In Step~4 we will show that the union $R$ of $R_i$ contains the entire outside $E$ of $P$, hence $q$ cannot lie outside $P$. In Step~5 we will show that $q$ cannot lie inside~$P$. We now proceed with a proper proof.

\medskip
\noindent \textbf{Definition of $F_i$.} Suppose toward contradiction that for every puncture ${q_i \in Q - \{q\}}$ there is a $q$-fish $F_i$ with nose $q_i$. Every $F_i$ has at least one head puncture which, by hypothesis, is also the nose of a $q$-fish, so we can choose a finite sequence of such punctures $(q_1, \dotsc, q_k)$ such that $q_{i+1} \in h(F_i)$ and $q_1 \in h(F_k)$. Furthermore, we may choose that sequence to be one of minimal length. It can be easily seen that $k \neq 2$ (see Figure~\ref{fig:not-two}), so we have $k \geq 3$. As we are dealing with a cycle, it will be convenient to write $q_{k+1} = q_1$.

\begin{figure}[h!]
\begin{tikzpicture}
\draw (2,2) circle (2cm);

\draw plot [smooth, tension=0.5] coordinates {(1.5,2) (2.5,2.5) (3.25,2) (3.85, 1.3)};
\draw plot [smooth, tension=0.5] coordinates {(1.5,2) (2.5,1.5) (3.25,2) (3.85, 2.7)};
\draw plot [smooth, tension=0.5] coordinates {(2.5,2) (1.5,2.5) (0.75,2) (0.15, 1.3)};
\draw plot [smooth, tension=0.5] coordinates {(2.5,2) (1.5,1.5) (0.75,2) (0.15, 2.7)};

\draw[fill=white] (1.5,2) circle (0.05cm);
\draw[fill=white] (2.5,2) circle (0.05cm);
\end{tikzpicture}
\caption{Two fish cannot produce such a sequence, as their tails could not intersect.}\label{fig:not-two}
\end{figure}
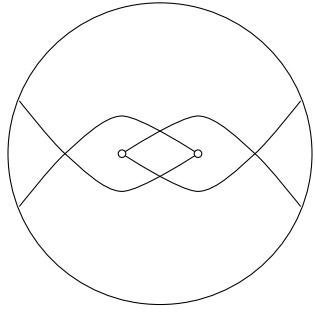

Since the sequence $(q_i)$ is of minimal length, $q_i \notin h(F_j)$ for all ${j \neq i-1}$. Note that if we remove all punctures other than $q$ and the~$q_i$ from~$D$, the punctures $(q_i)$ and the fish $(F_i)$ still give us a minimal length sequence of $q$-fish with the above properties, and so we may assume without loss of generality that there are no other punctures on~$D$. Since $q$ cannot be in the head of a $q$-fish, we have $h(F_i) = \{q_{i+1}\}$ for all $i$.

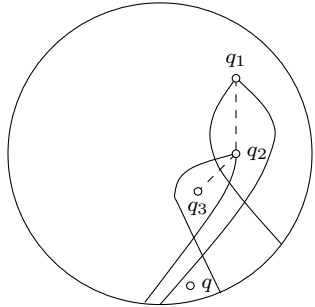
\begin{figure}[]
\begin{tikzpicture}
\draw (2,2) circle (2cm);

\draw plot [smooth, tension=0.7] coordinates {(3,3) (2.7,2.5) (2.7,2) (3,1.5) (3.6,0.8)};
\draw plot [smooth, tension=0.7] coordinates {(3,3) (3.45,2.5) (3.45,2) (3,1.2) (2,0)};

\draw plot [smooth, tension=0.7] coordinates {(3,2) (2.4,1.8) (2.2,1.5) (2.3,1.2) (2.8,0.15)};
\draw plot [smooth, tension=0.7] coordinates {(3,2) (2.8,1.4) (1.8,0.03)};

\draw [dashed] (3,3) -- (3,2.4)  -- (3,2);

\draw [dashed] (3,2) -- (2.5,1.5);

\draw[fill=white] (3,3) circle (0.05cm) node [above] {\tiny $q_1$};
\draw[fill=white] (3,2) circle (0.05cm) node [right] {\tiny $q_2$};
\draw[fill=white] (2.5,1.5) circle (0.05cm) node [below] {\tiny $q_3$};
\draw[fill=white] (2.4,0.25) circle (0.05cm) node [right] {\tiny $q$};
\end{tikzpicture}
\caption{The arcs $a_1$ and $a_2$ drawn as dashed lines.} \label{fig:a_i}
\end{figure}

For each $i$, let $a_i$ be an arc lying in $\head(F_i) - \head(F_{i+1})$ connecting $q_i$ to $q_{i+1}$ (see Figure~\ref{fig:a_i}), and in minimal position with respect to the arcs of $\mathcal{A}$. Such an arc exists, as $\head(F_i) - \head(F_{i+1})$ is connected.

\medskip

\noindent \textbf{Step 1.} \emph{We have ${\head(F_i) \cap \head(F_j) = \emptyset}$ for all ${i \neq j-1, j, j+1}$. In particular, all arcs $a_i$ are disjoint.}

\begin{proof}

For the second assertion, suppose $a_i$ and $a_j$ intersect. Since $a_j$ lies in $\head(F_j) - \head(F_{j+1})$, we have $i \neq j+1$. Similarly, $i \neq j-1$.
Thus to complete the proof, it remains to verify the first assertion. Since $q_i \notin h(F_j)$ and $q_j \notin h(F_i)$, the heads of $F_i$ and $F_j$ intersect in such a way that the nose of each is not contained in the head of the other. This contradicts Lemma~\ref{lem:tailsdisjoint}.

\end{proof}

\noindent \textbf{Definition of $P$.} By Step~1, the concatenation of the arcs $a_i$ forms an embedded ideal polygon in $D$ that we will call~$P$ (it is still possible that $q$ lies inside~$P$). For each $i$, the fish $F_i$ consists of the {\em outer} arc~$\out{i}$ which begins outside $P$, and the {\em inner} arc~$\inn{i}$ which begins inside $P$.
Without loss of generality, we may assume that $P$ is oriented as in Figure~\ref{fig:polygon}.

\begin{figure}
\begin{tikzpicture}
\draw (2,2) circle (2cm);

\draw plot [smooth, tension=0.7] coordinates {(2.7,2.7) (2.3,1.7) (2.7,0.8) (3.15,0.35)};
\draw plot [smooth, tension=0.7] coordinates {(2.7,2.7) (3.1,1.6) (2.7,0.7) (2.1,0)};

\draw [dashed] (2.7,2.7) -- (2.7,1.3) -- (1.3,1.3) -- (1.3,2.7) -- cycle;

\node at (2.4,1.8) [left] {\tiny $\inn{1}$};
\node at (3.0,1.6) [right] {\tiny $\out{1}$};

\node at (2.6,2) [below right] {\tiny $a_1$};
\node at (2,1.3) [below] {\tiny $a_2$};
\node at (1.3,2) [left] {\tiny $a_3$};
\node at (2,2.7) [above] {\tiny $a_4$};

\draw[fill=white] (2.7,2.7) circle (0.05cm) node [above right] {\tiny $q_1$};
\draw[fill=white] (2.7,1.3) circle (0.05cm) node [below] {\tiny $q_2$};
\draw[fill=white] (1.3,1.3) circle (0.05cm) node [below left] {\tiny $q_3$};
\draw[fill=white] (1.3,2.7) circle (0.05cm) node [above left] {\tiny $q_4$};

\end{tikzpicture}
\caption{Inner and outer arcs of a fish in the sequence.}\label{fig:polygon}
\end{figure}
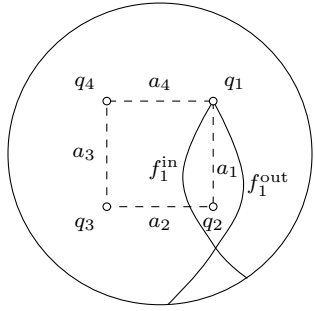

\medskip

\noindent \textbf{Step 2.} \emph{For any $i$, suppose that the first intersection of $\inn{i}$ with $P$ lies on $a_j$. Then $\inn{i}$ intersects $\inn{j}$ prior to its first point of intersection with $P$.}

\begin{proof}

First, recall that $a_i$ is contained in $\head(F_i)$, and hence the arcs which $F_i$ consists of do not intersect $a_i$. Therefore $j \neq i$. Additionally, $a_{i-1}$ is contained in $\head(F_{i-1}) - \head(F_i)$, and so the arcs which $F_i$ consists of do not intersect $a_{i-1}$. Therefore $j \neq i-1$. Thus $\inn{i}$ starts outside $\head(F_j)$.

Because the edge $a_j$ is contained entirely within $\head(F_j)$, crossing~$a_j$ requires crossing both $\inn{j}$ and $\out{j}$. Now refer to Figure~\ref{fig:crossing-direction}. For $\inn{i}$ to cross $a_j$ in the indicated direction, it is not possible for it to first cross $\out{j}$, then $a_j$, then $\inn{j}$. Therefore it must intersect $\inn{j}$ first, then $a_j$, and finally $\out{j}$.
\end{proof}

\begin{figure}
\begin{tikzpicture}
\draw (2,2) circle (2cm);

\draw plot [smooth, tension=0.7] coordinates {(2.8,1.55) (2.4,0.7) (2,0.7) (1.6,1.3) (1.4,2.5) (1.25,3.85)};

\draw (2.8,1.55) edge[out=145,in=0] (0,2);

\draw [dashed] (2,1) -- (1.2,1.55) -- (1.2,2.45) -- (2,3) -- (2.8,2.45) -- (2.8,1.55) -- cycle;

\node at (2.4,0.7) [below] {\tiny $\out{j}$};
\node at (2.2,1.9) [above] {\tiny $\inn{j}$};

\draw [thick,->] (2.295,1.425) -- (2.505,1.125);

\draw[fill=white] (2,1) circle (0.05cm);
\draw[fill=white] (1.2,1.55) circle (0.05cm);
\draw[fill=white] (1.2,2.45) circle (0.05cm);
\draw[fill=white] (2,3) circle (0.05cm);
\draw[fill=white] (2.8,2.45) circle (0.05cm);
\draw[fill=white] (2.8,1.55) circle (0.05cm) node [right] {\tiny $q_j$};

\end{tikzpicture}
\caption{Example: The dashed line $a_j$, marked with an arrow, cannot be crossed first in the indicated direction by an arc from any other puncture outside $\head(F_j)$ which intersects $\out{j}$ prior to $\inn{j}$.}\label{fig:crossing-direction}
\end{figure}
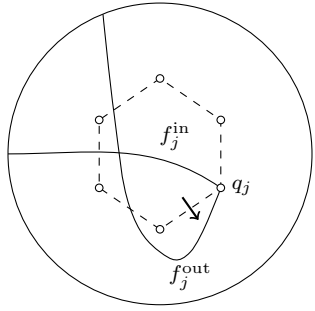

\noindent \textbf{Definition of $g_i$.} For each arc $\inn{i}$, consider the edge $a_j$ of $P$ with which it has its first intersection, and define a map $s(i) = j$. Then there exists a minimal length sequence $j_1, \dotsc, j_m$ with $s(j_i) = j_{i+1}$ and $s(j_m) = j_1$. Define $g_i = \inn{j_i}$. By Step~2 these $g_i$ each intersect the successive $g_{i+1}$ prior to leaving $P$.

\medskip
\noindent \textbf{Step 3.} \emph{$g_i$ intersects $g_{i-1}$ prior to leaving $P$.}

\begin{proof}

Consider Figure~\ref{fig:intersect-in-P}. The arc $g_{i-1}$ divides $\head(F_{j_i})$ into two regions, one of which is adjacent to $q_{j_i}$ and does not contain $q_{j_i+1}$ or, hence, any other puncture. The arc $a_{j_i}$ further subdivides this into two more regions, one of which lies inside $P$. Let $K$ be that region (the upper right quarter of $\head(F_{j_i})$ in Figure~\ref{fig:intersect-in-P}).

If $g_i$ leaves $P$ prior to intersecting $g_{i-1}$, then $g_i$ intersects $a_{j_{i+1}}$ before the intersection with $g_{i-1}$.  Thus $a_{j_{i+1}}$ enters the region $K$ via the arc $g_i$. It must then proceed to leave $K$, but it cannot cross $a_{j_i}$ by Step~1, and it cannot cross $g_{i-1}$ prior to $g_{i-1}$ intersecting $a_{j_i}$ or it would contradict the definition of $g_i$. So $a_{j_{i+1}}$ must both enter and exit $K$ by crossing $g_i$, forming a bigon in $K$ between $a_{j_{i+1}}$ and $g_i$, and contradicting minimal position.
\end{proof}

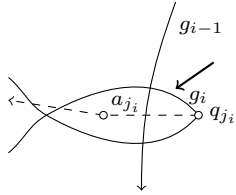
\begin{figure}[h]
\begin{tikzpicture}

\draw (3,1) edge[out=130,in=30] (1,1);
\draw (1,1) edge[out=210,in=40] (0.5,0.5);
\node at (3,1) [above] {\tiny $g_i$};

\draw (3,1) edge[out=230,in=330] (1,1);
\draw (1,1) edge[out=150,in=320] (0.5,1.5);
\node at (2.05,0.9) [above] {\tiny $a_{j_i}$};

\draw (3,1) edge[dashed] (1.75,1);
\draw (1.75,1) edge[dashed,->] (.5,1.2);

\draw (2.7,2.5) edge[out=250,in=90,->] (2.25,0);
\node at (2.6,2.4) [below right] {\tiny $g_{i-1}$};

\draw (3.2,1.7) edge[thick,->] (2.7,1.35);

\draw[fill=white] (1.75,1) circle (0.05cm);
\draw[fill=white] (3,1) circle (0.05cm);
\node at (3,1) [right] {\tiny $q_{j_i}$};

\end{tikzpicture}
\caption{If $g_i$ leaves $P$ prior to intersecting $g_{i-1}$, then there is an intersection between $g_i$ and $a_{j_{i+1}}$ where indicated by the arrow.} \label{fig:intersect-in-P}
\end{figure}

\noindent \textbf{Definition of $R$.} Two consecutive arcs $g_i$ and $g_{i+1}$ in the cycle divide $D$ into two regions. Let $R_i\subset D$ be the closed region bounded by $g_i$ and $g_{i+1}$ which does not contain the punctures $q_{j_i}$ and $q_{j_{i+1}}$ (See Figure~\ref{fig:p-prime}). Define $R = \bigcup R_i$. The polygon~$P$ also divides the disc into two regions. Let $E\subset D$ be the exterior of $P$, the region adjacent to $p$.

\medskip
\noindent \textbf{Step 4.} \emph{$E \subset R$.}

\begin{proof}

Since $E$ is connected, it suffices to show that $E \cap R$ is both open and closed in $E$. Each $R_i$ is closed by definition, so certainly $E \cap R$ is closed in $E$. Let $x \in E\cap R$. If $x$ does not lie on one of the arcs $g_i$, then $x$ lies in the interior of $R_i$ for some $i$, and therefore $x$ has an open neighbourhood in $R$.

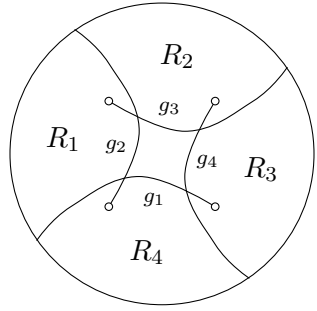
\begin{figure}[h]
\begin{tikzpicture}
\draw (2,2) circle (2cm);

\draw plot [smooth, tension=0.7] coordinates {(2.7,2.7) (2.3,1.7) (2.7,0.8) (3.15,0.35)};
\draw plot [smooth, tension=0.7] coordinates {(2.7,1.3) (1.7,1.7) (0.8,1.3) (0.35,0.85)};
\draw plot [smooth, tension=0.7] coordinates {(1.3,1.3) (1.7,2.3) (1.3,3.2) (0.85,3.65)};
\draw plot [smooth, tension=0.7] coordinates {(1.3,2.7) (2.3,2.3) (3.2,2.7) (3.65,3.15)};

\node at (1.9,1.4) {\tiny $g_1$};
\node at (1.4,2.1) {\tiny $g_2$};
\node at (2.1,2.6) {\tiny $g_3$};
\node at (2.6,1.9) {\tiny $g_4$};

\node at (0.7,2.2) {\small $R_1$};
\node at (2.2,3.3) {\small $R_2$};
\node at (3.3,1.8) {\small $R_3$};
\node at (1.8,0.7) {\small $R_4$};

\draw[fill=white] (2.7,2.7) circle (0.05cm);
\draw[fill=white] (2.7,1.3) circle (0.05cm);
\draw[fill=white] (1.3,1.3) circle (0.05cm);
\draw[fill=white] (1.3,2.7) circle (0.05cm);
\end{tikzpicture}
\caption{The arcs $g_i$, and the regions $R_i$ that they form. In more complicated examples, these $R_i$ may overlap.}\label{fig:p-prime}
\end{figure}

Suppose $x$ lies on $g_i$ for some $i$. By Steps~2 and~3, the arc $g_i$ intersects both $g_{i-1}$ and $g_{i+1}$ prior to leaving $P$ for the first time. Since $x \in E$, its position on the arc $g_i$ is after the intersections of $g_i$ with $g_{i-1}$ and $g_{i+1}$. Therefore, a sufficiently small neighbourhood of $x$ lies in the union $R_{i-1} \cup R_i \subset R$. It follows that $E \cap R$ is open in $E$, and hence $E \subset R$.
\end{proof}

\noindent \textbf{Step 5.} \emph{$E$ contains the puncture $q$.}

\begin{proof}

Suppose not. Then $q$ lies inside the polygon~$P$. By Lemma~\ref{lemma:tails} no arc from $q$ may pass through the head of any fish with $q$ in its tail. In particular, no arc from $q$ may pass through $F_i$ for any $i$. However, $a_i \in \head(F_i)$ for each $i$, and so no arc from $q$ may cross $P$. It follows that there are no arcs in $\mathcal{A}$ from~$q$ to~$p$. For the same reason, it would not be possible to add an arc from~$q$ to~$p$ to the collection~$\mathcal{A}$.

Now consider the space obtained by removing every arc in $\mathcal{A}$ from~$D$, and let $C$ be the component of that space which contains the puncture~$q$. Let $\alpha$ be an embedded arc in $C$ from $q$ to the boundary of $C$. The point $x \in C$ where $\alpha$ ends lies on some arc $\gamma \in \mathcal{A}$. Let $\beta$ be the subarc of $\gamma$ running from $x$ to $p$. Then up to homotopy, the arc obtained by concatenating $\alpha$ and $\beta$ is an arc from $q$ to $p$ on $D$ which intersects each arc in $\mathcal{A}$ at most once. Therefore, it is possible to add an arc from $q$ to $p$ to the collection $\mathcal{A}$, a contradiction.
\end{proof}

\noindent \textbf{Final contradiction.} By Step~5, the puncture $q$ lies in some $R_i$. Without loss of generality, assume $q \in R_1$, the region bounded by $g_1$ and $g_2$ (see Figure~\ref{fig:p-prime}). Recall that $q_{j_2} \notin R_1$. We claim that $q_{j_2 + 1} \in R_1$. Indeed, the arc $g_1$ by definition intersects $a_{j_2}$ when first leaving~$P$. Since $g_2$ cannot intersect $a_{j_2}$, because $a_{j_2} \in \head(F_{j_2})$, the region $R_1$ contains the half of $a_{j_2}$ which ends at $q_{j_2+1}$.

Now consider the arc $\out{j_2}$. It starts at $q_{j_2}$, and must be positioned such that $q_{j_2+1} \in \head(F_{j_2})$. For this to be possible, $\out{j_2}$ must enter $R_1$ by crossing $g_1$, then exit by intersecting $g_2$. After this it is not possible for $\out{j_2}$ to reenter $R_1$ without any double intersections, and so $\tail(F_{j_2}) \cap R_1 = \emptyset$. Therefore $q \notin \tail(F_{j_2})$, a contradiction. This finishes the proof of Lemma~\ref{lemma:cycles}.

\subsection{Proof of Lemma~\ref{lemma:passing}}

To prove Lemma~\ref{lemma:passing} we start from the following.

\begin{lemma}\label{lemma:order-fixed}
Let $r\in Q-\{v\}$ and consider arcs in $\mathcal{A}$ from $r$ to $p$ satisfying $\alpha_1 < \alpha_2 < \dotsb < \alpha_m < \alpha_1$ in the cyclic order about $r$.
If there is no fish $F=(\alpha_i,\alpha_j)$ with $h(F) = \{v\}$, then ${\tilde \alpha_1 \leq \tilde \alpha_2 \leq  \dotsb \leq  \tilde \alpha_m \leq  \tilde \alpha_1}$.
\end{lemma}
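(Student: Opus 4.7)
The plan is to argue that the cyclic order of $\tilde\alpha_1,\dots,\tilde\alpha_m$ at $\tilde r$ in $\widetilde D$ literally agrees with the given order of $\alpha_1,\dots,\alpha_m$ at $r$ in $D$, by exhibiting a minimal position representative of $\widetilde{\mathcal A}$ obtained by homotopies supported away from a small neighbourhood of $\tilde r$. First, since every arc of $\mathcal A$ starting at $r$ ends at $p$, the arcs of $\widetilde{\mathcal A}$ incident to $\tilde r$ are precisely the $\tilde\alpha_i$; and before performing any homotopy, including the $\alpha_i$ into $\widetilde D$ preserves the tangent directions at $r=\tilde r$, so the $\tilde\alpha_i$ initially appear at $\tilde r$ in the order $\tilde\alpha_1<\dots<\tilde\alpha_m$.

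The key claim is that no half-bigon in $\widetilde D$ between two $\tilde\alpha_i,\tilde\alpha_j$ limits to $\tilde r$. Since $\alpha_i,\alpha_j$ are in minimal position in $D$, such a half-bigon must have $v$ in its interior, as otherwise the same half-bigon would live in $D$. Since $\alpha_i,\alpha_j$ intersect at most once, the boundary of the half-bigon is forced to consist of the initial segments of $\alpha_i,\alpha_j$ from $r$ to their unique intersection, so the half-bigon coincides with the head of the fish formed by $\alpha_i$ and $\alpha_j$. The condition that its interior in $\widetilde D$ contains no puncture then reads exactly as $h(F)=\{v\}$, which is excluded by hypothesis.

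I would then put $\widetilde{\mathcal A}$ in minimal position by iteratively resolving innermost bigons and half-bigons. Bigons are interior discs and hence disjoint from $\tilde r$; a half-bigon at $\tilde r$ would have to be between two arcs of $\widetilde{\mathcal A}$ starting at $\tilde r$, which by the opening paragraph can only be two $\tilde\alpha_i$'s, and these are excluded by the key claim. Each resolution is therefore a local homotopy whose support is disjoint from a fixed small neighbourhood of $\tilde r$, so the arcs are never modified near $\tilde r$ and no new half-bigon at $\tilde r$ can arise. Consequently the cyclic order at $\tilde r$ in the final minimal position is still $\tilde\alpha_1,\dots,\tilde\alpha_m$. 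The weak inequality accommodates the case in which some $(\alpha_i,\alpha_{i+1})$ is a fish with $t(F)=\{v\}$: there the corresponding half-bigon at $\tilde p$ gets resolved, the unique intersection of $\tilde\alpha_i,\tilde\alpha_{i+1}$ is cancelled, and the two arcs become homotopic, so they coincide in $\widetilde{\mathcal A}$ and occupy the same position in the cyclic order. The main obstacle is rigorously verifying that no resolution step creates a new half-bigon at $\tilde r$, which however follows from the locality of each resolution together with the observation that the only arcs approaching $\tilde r$ are the $\tilde\alpha_i$.
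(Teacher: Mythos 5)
Your overall strategy coincides with the paper's: exhibit a minimal-position representative of the $\tilde\alpha_i$ by homotopies supported away from $\tilde r$, and use the hypothesis to rule out half-bigons at $\tilde r$ so that such homotopies always exist. The ``key claim'' (that a half-bigon at $\tilde r$ between $\tilde\alpha_i,\tilde\alpha_j$ would force $h(\alpha_i,\alpha_j)=\{v\}$) is correct and is exactly the paper's argument. Your iteration, however, is stated differently from the paper's: you resolve fully innermost bigons and half-bigons (no other arcs and no punctures inside), whereas the paper identifies a tail half-bigon $T=\tail(\tilde\alpha_i,\tilde\alpha_j)$ through which other arcs may still pass but none reaches $\tilde p$, and then homotopes $\tilde\alpha_i$ across $T$ while explicitly preserving $|\tilde\alpha_k\cap\tilde\alpha_i|$ for all $k\ne i,j$. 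Both variants can be made to work; yours is arguably a little cleaner because the no-arcs-inside condition makes preservation of the other intersections automatic.

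There are two genuine problems with the write-up, though. The first is the justification of ``no new half-bigon at $\tilde r$ can arise.'' You argue that this follows because each resolution has support disjoint from a neighbourhood of $\tilde r$, so the arcs are never modified near $\tilde r$. That implication is false: a half-bigon at $\tilde r$ is not a local feature near $\tilde r$ --- it depends on whether the whole region between the initial segments of $\tilde\alpha_i$ and $\tilde\alpha_j$, up to their intersection point, contains a puncture. A homotopy of $\tilde\alpha_i$ supported far from $\tilde r$ can in principle move the intersection with $\tilde\alpha_j$ and change which punctures lie in that region. The correct reason the claim is true is different: each resolution moves an arc only inside a small neighbourhood $U$ of the resolved half-bigon, and $U$ contains no punctures and meets no other arc; hence for every pair still intersecting once, the intersection point and the set of punctures in the head region are unchanged. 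Since initially every such head set is nonempty (by the hypothesis), it stays nonempty, so no half-bigon at $\tilde r$ ever appears. You need to say this; ``locality'' by itself does not yield it, and it is precisely the point the paper's explicit intersection-preservation clause is there to support.

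The second issue is smaller but is an outright error. You say the weak inequality accommodates the case of a fish $(\alpha_i,\alpha_{i+1})$ with $t(F)=\{v\}$, on the grounds that after resolving the half-bigon at $\tilde p$ the two arcs ``become homotopic.'' They do not: after the resolution they are disjoint, but the region between them is the image of $h(F)\setminus\{v\}$, which is nonempty because $h(F)\ne\{v\}$ by hypothesis. So they remain non-homotopic and retain distinct, correctly ordered positions around $\tilde r$. The equality $\tilde\alpha_i=\tilde\alpha_{i+1}$ in the cyclic order actually occurs only when $\alpha_i$ and $\alpha_{i+1}$ are already disjoint in $D$ with $v$ the unique puncture between them; this is what the paper's parenthetical ``though some of them might become homotopic'' refers to.
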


We recall that $\tilde \alpha_i$ is the image if $\alpha_i$ under the embedding $\widetilde D\subset D$.
Here the cyclic order on $\tilde \alpha_i$ is the one obtained after putting them in minimal position on $\widetilde D$.

\begin{proof}
We first note that if the arcs $\tilde \alpha_i$ are in minimal position, then the order of arcs is unchanged by the removal of $v$ (though some of them might become homotopic). We may thus assume that $\tilde \alpha_i$ are not in minimal position. We will perform a series of homotopies of the arcs $\tilde \alpha_i$ with support away from $\tilde r$, and with each homotopy reducing the total number of intersections among the collection $\{\tilde \alpha_i\}$ by one. Hence after finitely many such homotopies, the arcs will be in minimal position, and because every homotopy has support away from $\tilde r$, the order of the arcs about $\tilde r$ will remain unchanged.

For an intersecting pair $(\tilde \alpha_i, \tilde \alpha_j)$ (we will not call it a fish since the arcs might become disjoint after homotopy), let $\tail(\tilde \alpha_i, \tilde \alpha_j)$ be the region they bound adjacent to $\tilde p$. In order to perform our homotopy, we will first show the existence of a pair $(\tilde \alpha_i, \tilde \alpha_j)$ whose $T=\tail(\tilde \alpha_i, \tilde \alpha_j)$ is a half-bigon with the property that for every other arc $\tilde \alpha_k$ with $k\neq i, j$, if $\tilde \alpha_k$ enters $T$, then it exits before arriving at $\tilde p$.

Since $\tilde \alpha_i$ are not in minimal position, there is a half-bigon formed by some $\tilde \alpha_{i_1},\tilde \alpha_{i_2}$. Such a half-bigon is not adjacent to $\tilde r$, because then $F=(\alpha_{i_1},\alpha_{i_2})$ would satisfy $h(F) = \{v\}$, violating our assumption. Thus the half-bigon is $T_1=\tail(\tilde \alpha_{i_1}, \tilde \alpha_{i_2})$.

Suppose $(\tilde \alpha_{i_1},\tilde \alpha_{i_2})$ above is not the desired pair, that is, there is some arc $\tilde \alpha_{i_3}$ which enters $T_1$ and does not exit it prior to arriving at~$\tilde p$. Without loss of generality, suppose $\tilde \alpha_{i_2}$ is the arc which $\tilde \alpha_{i_3}$ intersects upon entering $T_1$. Then $T_2= \tail(\tilde \alpha_{i_3}, \tilde \alpha_{i_2})$ is properly contained in $T_1$.
If the pair $(\tilde \alpha_{i_3}, \tilde \alpha_{i_2})$ is still not the desired pair, we repeat the process, at each step obtaining $T_k \subsetneq T_{k-1}$.
As there are only finitely many arcs in the collection, eventually we must terminate at the desired pair $(\tilde \alpha_i, \tilde \alpha_j)$.

Let $U\subset \widetilde D$ be a small open neighbourhood of $T$, not containing any punctures, and not intersecting any arc other than $\tilde \alpha_i, \tilde \alpha_j$, and arcs which intersect $T$. We homotope  $\tilde \alpha_i$ within $U$ so that $\tilde \alpha_i \cap \tilde \alpha_j = \emptyset$, and such that for each $k \neq i, j$, the value $|\tilde \alpha_k \cap \tilde \alpha_i|$ remains unchanged. We note that after applying this homotopy, there are still no half-bigons adjacent to $\tilde r$, and we can repeat the process.
\end{proof}

\begin{cor}\label{lemma:vanishing}
Let $(\alpha, \beta)$ and $(\gamma, \delta)$ be two non-vanishing minimal $q$-fish with common nose $r$.
Then ${\tilde\alpha < \tilde\beta \leq \tilde\gamma < \tilde\delta \leq \tilde\alpha}$.
\end{cor}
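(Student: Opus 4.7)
The plan is to lift both fishes to $\widetilde{D}$ and read off the cyclic order from disjointness of heads provided by Lemma~\ref{lem:tailsdisjoint}.

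Since $(\alpha,\beta)$ and $(\gamma,\delta)$ are non-vanishing, $(\tilde\alpha,\tilde\beta)$ and $(\tilde\gamma,\tilde\delta)$ are fish on $\widetilde{D}$ formed by $\widetilde{\mathcal{A}}$, both with nose $\tilde r$. The CCW orientation descends from $D$ to $\widetilde{D}$ because the cyclic order of arcs at the cusp of $r$ is a homotopy invariant and is unaffected by removing the distant puncture $v$. Moreover, each of these fish is a $\tilde q$-fish: since $q\neq v$ lies in the tail of each original fish, and homotopies cannot drag a puncture across an arc, $\tilde q$ remains in the tail on $\widetilde{D}$.

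I then apply Lemma~\ref{lem:tailsdisjoint} on $\widetilde{D}$ to the two $\tilde q$-fish $(\tilde\alpha,\tilde\beta)$ and $(\tilde\gamma,\tilde\delta)$ with common nose $\tilde r$. In the proof of that lemma one supposes that each nose lies outside the other's open head and derives a contradiction from intersecting heads; here that hypothesis is automatic because the shared nose $\tilde r$ sits only on the boundary, not in the interior, of each head. Hence intersecting heads would force the tails to be disjoint, contradicting the presence of $\tilde q$ in both; so the heads of $(\tilde\alpha,\tilde\beta)$ and $(\tilde\gamma,\tilde\delta)$ are disjoint.

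Finally, disjoint heads translate into the desired cyclic order. Each head covers an angular wedge at the cusp of $\tilde r$, namely the CCW sector from the first to the second arc of the pair. Two such wedges are disjoint precisely when the four arcs appear in the cyclic order $\tilde\alpha,\tilde\beta,\tilde\gamma,\tilde\delta$ without interleaving; this also rules out $\tilde\alpha=\tilde\gamma$ or $\tilde\beta=\tilde\delta$, each of which would force the two wedges to share an opening at $\tilde r$. We therefore land in exactly $\tilde\alpha<\tilde\beta\leq\tilde\gamma<\tilde\delta\leq\tilde\alpha$: the strict inequalities come from the two arcs of each fish being distinct and non-homotopic on $\widetilde{D}$ (they intersect in minimal position there), while the non-strict ones allow either that $\beta=\gamma$ or $\delta=\alpha$ already in $\mathcal{A}$, or that these pairs merge under the removal of $v$. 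The main subtlety I expect is checking that the orientation and tail-puncture data descend cleanly to $\widetilde{D}$ through the homotopy putting $\widetilde{\mathcal{A}}$ in minimal position, which is settled by the non-vanishing hypothesis.
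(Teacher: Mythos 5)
Your approach diverges from the paper's and contains a gap. The paper first shows (via the infinite cyclic cover argument) that none of the pairs $(\beta,\gamma)$, $(\alpha,\delta)$, $(\alpha,\gamma)$, $(\beta,\delta)$ forms a fish with head $\{v\}$, and only then invokes Lemma~\ref{lemma:order-fixed} to conclude the cyclic order is preserved. You instead try to conclude directly by applying Lemma~\ref{lem:tailsdisjoint} on $\widetilde D$ to the two lifted fish. This is where the argument breaks down.

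Lemma~\ref{lem:tailsdisjoint}, as the paper proves and uses it, is an assertion about two fish with \emph{distinct} noses (look at Figure~\ref{fig:crossing-fish} and at the two sub-cases involving $x=\head(F)\cap\tail(F)$ and $y=\head(F')\cap\tail(F')$: the topological configurations considered presuppose distinct noses). Whenever the paper needs a shared-nose version, as in possibility~(b) in the proof of Proposition~\ref{lemma:classes}, the authors first record that the fish are \emph{minimal} so their heads are disjoint in a neighbourhood of $r$, and then split the nose by introducing an auxiliary puncture $r'$ to reduce to distinct noses. That reduction is exactly what is unavailable to you: on $\widetilde D$ the fish $(\tilde\alpha,\tilde\beta)$ and $(\tilde\gamma,\tilde\delta)$ need not be minimal (the paper explicitly warns that minimality may be lost under removal of $v$), so you cannot assert their heads are disjoint near $\tilde r$, and without that the nose-splitting trick creates intersections. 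In fact, for \emph{non-minimal} fish with a common nose the conclusion of Lemma~\ref{lem:tailsdisjoint} can simply fail: e.g.\ arcs in the cyclic order $\alpha<\gamma<\delta<\beta$ about $r$ can be arranged so that $(\alpha,\beta)$ and $(\gamma,\delta)$ are both $q$-fish, their heads overlap, and neither nose is in the other's head. So the step ``intersecting heads would force the tails to be disjoint'' is not justified for a common nose. Note also that establishing the heads of the lifted fish are disjoint near $\tilde r$ is essentially equivalent to the conclusion $\tilde\alpha<\tilde\beta\leq\tilde\gamma<\tilde\delta\leq\tilde\alpha$ you are trying to prove, so the argument is circular at this point. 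The substantive content of the corollary — ruling out a vanishing ``middle'' pair like $(\beta,\gamma)$ — has to be established, and the paper's covering-space computation is precisely what does it.

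A secondary imprecision: you assert that ``the cyclic order of arcs at the cusp of $r$ is a homotopy invariant and is unaffected by removing the distant puncture $v$.'' This is false in general — a pair $(\beta,\gamma)$ with $h(\beta,\gamma)=\{v\}$ bounds a half-bigon at $\tilde r$ after removing $v$, and putting $\widetilde{\mathcal A}$ in minimal position will swap $\tilde\beta$ and $\tilde\gamma$. What \emph{is} true, and what you actually need, is that the orientation of each individual non-vanishing fish descends; but this weaker statement does not get you the cyclic order of all four arcs without first knowing the middle pair does not vanish.
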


\begin{proof}
Since $(\alpha, \beta)$ and $(\gamma, \delta)$ are minimal $q$-fish, by Remark~\ref{rmk:not_three} we have ${\alpha < \beta < \gamma < \delta < \alpha}$ in the cyclic order about $r$.
We will prove that the four arcs $\alpha, \beta, \gamma, \delta$ satisfy the condition of Lemma~\ref{lemma:order-fixed} that there is no fish $F$ formed by these four arcs with $h(F) = \{v\}$.

We identify the interior of $D$ with the interior of a punctured annulus~$A$ whose boundary circles correspond to $r$ and $p$. Consider the infinite cyclic cover $\overline A$ of $A$ corresponding to the kernel of the map $\pi_1A\to\Z$ induced by removing all the punctures of $A$ (except $r$ and $p$).
We represent $\overline A$ as an infinite strip with boundary lines $\bar r, \bar p$ corresponding to $r,p$ as depicted in Figure~\ref{fig:beta-gamma}. If $\overline \alpha,\overline \beta$ are intersecting lifts of $\alpha,\beta$ to~$\overline A$, we denote by $\head(\overline \alpha,\overline \beta)$ the region they bound in $\overline A$ adjacent to $\bar r$ (the lift of $\head(\alpha,\beta)$), and by $\tail(\overline \alpha,\overline \beta)$ the region they bound in $\overline A$ adjacent to $\bar p$ (the lift of $\tail(\alpha,\beta)$).

Let $\bar q$ be a lift of $q$ to $\overline A$. Consider the lifts $\overline\alpha, \overline\beta, \overline\gamma, \overline\delta$ such that $\overline\alpha$ intersects $\overline\beta$, $\overline\gamma$ intersects $\overline\delta$, and $\bar q$ is in both $\tail(\overline\alpha, \overline\beta)$ and $\tail(\overline\gamma, \overline\delta)$. Without loss of generality, they are ordered $\overline\alpha < \overline\beta < \overline\gamma <\overline \delta$ in $\overline A$ according to their starting points at $\bar r$. Since $\bar q$ is to the left of $\overline \alpha$ and to the right of $\overline \delta$, we have that $\overline\alpha$ intersects $\overline\delta$. Consequently $(\alpha, \delta)$ is a fish, with $\beta, \gamma$ starting in $\head(\alpha, \delta)$.

We claim that if there is some fish $F$ formed by $\alpha, \beta, \gamma, \delta$, with $h(F) = \{v\}$, then in particular $(\beta, \gamma)$ is such a fish. To justify the claim, suppose first that we have any three arcs $\alpha_1,\alpha_2,\alpha_3$ such that $(\alpha_1, \alpha_3)$ is a fish, and $\alpha_2$ begins in $\head(\alpha_1, \alpha_3)$. Observe that if $h(\alpha_1,\alpha_3) = \{v\}$, then either $(\alpha_1, \alpha_2)$ is a fish with $h(\alpha_1, \alpha_2) = \{v\}$, or $(\alpha_2, \alpha_3)$ is a fish with $h(\alpha_2, \alpha_3) = \{v\}$.

Now we can apply this observation to the configuration in the claim. If ${h(\alpha, \delta) = \{v\}}$, then one of $(\alpha, \beta)$ or $(\beta, \delta)$ is a fish $F$ with $h(F) = \{v\}$. By assumption, $(\alpha, \beta)$ is a non-vanishing fish, so we must have $h(\beta, \delta) = \{v\}$. Repeating the same argument for the triple $\beta,\gamma,\delta$, we conclude that $(\beta, \gamma)$ is a fish, and $h(\beta, \gamma) = \{v\}$, as desired. Analogously, if $(\alpha, \gamma)$ is a fish with $h(\alpha,\gamma) = \{v\}$, then also $(\beta, \gamma)$ is a fish with $h(\beta, \gamma) = \{v\}$, as desired.

Finally, if $(\gamma,\beta),(\gamma,\alpha),$ or $(\delta,\beta)$ were a fish $F$ with $h(F) = \{v\}$, then, arguing analogously as in the preceding paragraph, the same would be true for $(\delta,\alpha)$, which is not even a fish (wrong order of arcs). This justifies the claim. Thus the arcs $\overline \beta$ and $\overline \gamma$ intersect, see Figure~\ref{fig:beta-gamma}.

\begin{figure}
\begin{tikzpicture}

\draw (0,0) -- (8,0);
\draw (0,2) -- (8,2);
\draw [dashed] (0,0) -- (0,2);
\draw [dashed] (8,0) -- (8,2);

\node at (4,2) [above] {\tiny $\bar r$};
\node at (4,0) [below] {\tiny $\bar p$};

\draw (2,2) -- (2,1.75);
\node at (2,2) [below left] {\tiny $\overline\alpha$};

\draw (3,2) edge[out=-90, in=90] (5,0);
\node at (3,2) [below left] {\tiny $\overline\beta$};

\draw (5,2) edge[out=-90, in=90] (3,0);
\node at (5,2) [below right] {\tiny $\overline\gamma$};

\draw (6,2) -- (6,1.75);
\node at (6,2) [below right] {\tiny $\overline\delta$};

\end{tikzpicture}
\caption{Since $\overline \beta$ and $\overline \gamma$ intersect, we have $\tail(\overline\alpha, \overline\beta) \cap \tail(\overline\gamma, \overline\delta) \subset \head(\overline\beta, \overline\gamma)$.}
\label{fig:beta-gamma}
\end{figure}

In $\overline A$, observe that $\tail(\overline\alpha, \overline\beta)$ lies to the right of~$\overline\beta$. Similarly, $\tail(\overline\gamma, \overline\delta)$ lies to the left of~$\overline \gamma$. Hence their intersection, and $\bar q$ which lies within it, lies in the region which is both right of~$\overline \beta$ and left of~$\overline \gamma$. This region is $\head(\overline \beta, \overline \gamma)$. Therefore $h(\beta, \gamma) \neq \{v\}$, since it must at least contain~$q$, contradiction.

This proves that no fish $F$ formed by $\alpha, \beta, \gamma, \delta$ has $h(F) = \{v\}$, and allows us to apply Lemma~\ref{lemma:order-fixed}. The strict inequalities follow from the fact that $(\alpha, \beta)$ and $(\gamma, \delta)$ are non-vanishing.
\end{proof}

We are now ready for the following.

\begin{proof}[Proof of Lemma~\ref{lemma:passing}]
Let $F=(\alpha, \beta)\in \mathcal F'_q$ with any nose $r$. In the case where the fish $\widetilde F=(\tilde \alpha, \tilde \beta)$ is minimal, we simply define $\varphi(F) = \widetilde F$.

Suppose now that the fish $\widetilde F$ is not minimal. This implies $\tilde \alpha$ and $\tilde \beta$ are no longer consecutive (after putting the arcs in $\widetilde D$ in minimal position), so there must be an arc $\tilde \varepsilon\in \widetilde{\mathcal{A}}$ that emanates from $\tilde r$ in $\head(\widetilde F)$. The arc $\tilde \varepsilon$ must intersect either $\tilde\alpha$ or $\tilde \beta$ before proceeding to $\tilde p$. Without loss of generality, suppose it intersects $\tilde \alpha$ when leaving $\head(\widetilde F)$.

If $\tilde \varepsilon$ does not intersect $\tilde \beta$, then $(\tilde \alpha, \tilde \varepsilon)$ is a $\tilde q$-fish. If $\tilde \varepsilon$ does intersect $\tilde \beta$, then either $(\tilde \alpha, \tilde \varepsilon)$ or $(\tilde \varepsilon, \tilde \beta)$ is a $\tilde q$-fish (see Figure~\ref{fig:epsilon}). If this resulting fish is minimal, then we finish our process. If it is not minimal, then we note that there are fewer arcs in $\widetilde{\mathcal{A}}$ originating in the head of either $(\tilde \alpha, \tilde \varepsilon)$ or $(\tilde \varepsilon, \tilde \beta)$ than there were in $(\tilde \alpha, \tilde \beta)$. Therefore if we continue this process, it will terminate after finitely many steps with a minimal $\tilde q$-fish that we define to be $\varphi(F)$.

\begin{figure}
\begin{tikzpicture}
\draw (1.5,1.5) circle (1.5cm);
\draw (5.5,1.5) circle (1.5cm);

\draw plot [smooth, tension=0.7] coordinates {(1.5,2) (2,1.4) (1.5,0.8) (0.8,0.2)};
\draw plot [smooth, tension=0.7] coordinates {(1.5,2) (1,1.4) (1.5,0.8) (2.2,0.2)};
\draw (1.5,2) edge [out=315,in=45] (0.5,0.35);

\draw [fill=white] (1.5,0.5) circle (0.05cm) node [below] {\tiny $\tilde q$};

\draw plot [smooth, tension=0.7] coordinates {(5.5,2) (6,1.4) (5.5,0.8) (4.8,0.2)};
\draw plot [smooth, tension=0.7] coordinates {(5.5,2) (5,1.4) (5.5,0.8) (6.2,0.2)};
\draw plot [smooth, tension=0.7] coordinates {(5.5,2) (5.5,1.5) (4.75,0.8) (5.8,0.05)};

\draw [fill=white] (5.5,0.5) circle (0.05cm);
\draw [fill=white] (5.2,0.2) circle (0.05cm);

\draw[fill=white] (1.5,2) circle (0.05cm) node [left] {\tiny $\tilde\alpha$} node [right] {\tiny $\tilde\beta$};
\draw[fill=white] (5.5,2) circle (0.05cm) node [left] {\tiny $\tilde\alpha$} node [right] {\tiny $\tilde\beta$};

\node at (5.4,1.9) [below] {\tiny $\tilde\varepsilon$};
\node at (1.5,1.9) [below] {\tiny $\tilde\varepsilon$};
\end{tikzpicture}
\caption{On the left, $\tilde \varepsilon$ intersects only $\tilde \alpha$ and this forms a $\tilde q$-fish ``smaller'' than $(\tilde \alpha, \tilde \beta)$. On the right, $\tilde \varepsilon$ also intersects $\tilde \beta$ and, depending on which puncture is $\tilde q$, either $(\tilde \alpha,\tilde \varepsilon)$ or $(\tilde \varepsilon, \tilde\beta)$ is the desired $\tilde q$-fish $\varphi(\tilde\alpha, \tilde\beta)$.}
\label{fig:epsilon}
\end{figure}
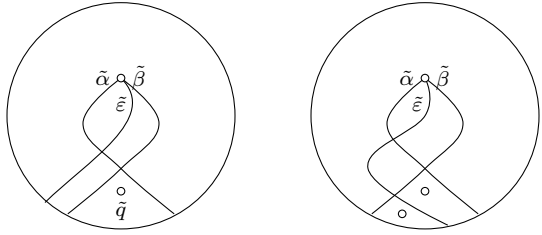

Let $F = (\alpha, \beta)$ and $F' = (\gamma, \delta)$ be two distinct non-vanishing minimal $q$-fish. Clearly if they have distinct noses, then $\varphi(F)$ and $\varphi(F')$ will have also distinct noses, in particular they will be distinct. Thus we consider only the case where $F$ and $F'$ have the same nose $r$.

By Corollary~\ref{lemma:vanishing}, we have ${\tilde\alpha < \tilde\beta \leq \tilde\gamma < \tilde\delta \leq \tilde\alpha}$.
Moreover, by construction $\varphi(\alpha, \beta)$ (respectively $\varphi(\gamma, \delta)$) is a fish whose head intersects a neighbourhood of~$r$ between $\tilde \alpha$ and $\tilde \beta$ (respectively between $\tilde \gamma$ and $\tilde \delta$). Hence $\varphi(\alpha, \beta) \neq \varphi(\gamma, \delta)$, as required.
\end{proof}

\section{Counting Excess Arcs using Tail Punctures of Fish} \label{section:bigons}

\begin{proof}[Proof of Proposition \ref{lemma:bigons}]
For simplicity of notation, let $k = k_r$. Let $\alpha_1 < \alpha_2 < \dotsc < \alpha_k< \alpha_1$
be the cyclic order about $r$ of the arcs in $\mathcal A$ with nose $r$.

We identify the interior of $D$ with the interior of a punctured annulus~$A$ whose boundary circles correspond to $r$ and $p$. Consider the infinite cyclic cover $\overline A$ of $A$ corresponding to the kernel of the map $\pi_1A\to\Z$ induced by removing all the punctures of $A$ . Let $\overline Q$ be the punctures of $\overline A$. We represent $\overline A=\R\times (0,1)-\overline Q$, where $\Z$ acts as horizontal translations.

\begin{figure}
\begin{tikzpicture}

\draw (2,2) circle (2cm);

\draw (2,2) -- (3.42,0.56);
\node at (3,1) [right] {\tiny $\alpha_1$};

\draw (2,2) edge[out=0,in=270] (3.3,3.5);
\node at (2.8,2) {\tiny $\alpha_2$};

\draw (2,2) edge[out=90,in=180] (3.95,2.4);
\node at (2.5,2.65) {\tiny $\alpha_3$};

\draw (2,2) -- (0.58,3.42);
\node at (1,3) [left] {\tiny $\alpha_4$};

\draw (1.5,2) edge[out=180,in=-135] (1.25,2.75);
\draw (1.25,2.75) edge[out=45,in=180] (3.72,3);
\node at (0.95,2.4) {\tiny $\alpha_5$};

\draw[fill=white] (2,2) circle (0.5cm) node [below left] {\tiny $r$};

\end{tikzpicture}
\end{figure}

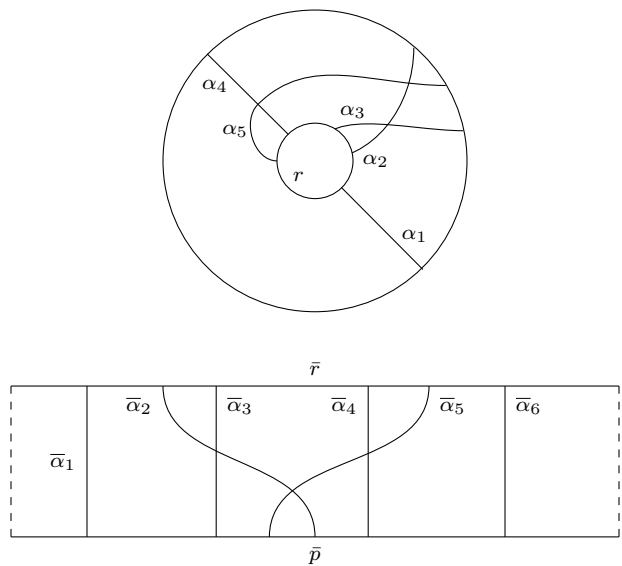
\begin{figure}
\begin{tikzpicture}

\draw (0,0) -- (8,0);
\draw (0,2) -- (8,2);
\draw [dashed] (0,0) -- (0,2);
\draw [dashed] (8,0) -- (8,2);

\node at (4,2) [above] {\tiny $\bar r$};
\node at (4,0) [below] {\tiny $\bar p$};

\draw (1,2) -- (1,0);
\node at (1,1) [left] {\tiny $\overline{\alpha}_1$};

\draw (2,2) edge[out=-90,in=90] (4,0);
\node at (2,2) [below left] {\tiny $\overline{\alpha}_2$};

\draw (2.7,2) -- (2.7,0);
\node at (2.7,2) [below right] {\tiny $\overline{\alpha}_3$};

\draw (4.7,2) -- (4.7,0);
\node at (4.7,2) [below left] {\tiny $\overline{\alpha}_4$};

\draw (5.5,2) edge[out=-90,in=90] (3.4,0);
\node at (5.5,2) [below right] {\tiny $\overline{\alpha}_5$};

\draw (6.5,2) -- (6.5,0);
\node at (6.5,2) [below right] {\tiny $\overline{\alpha}_6$};

\end{tikzpicture}
\caption{An example lifting of arcs from a punctured annulus to the infinite strip. Here $\overline{\alpha}_6$ is a second lift of~$\alpha_1$.}
\label{fig:lifts}
\end{figure}

Let $\mathcal V$ be the set of homotopy classes of arcs in $\overline A$ that are lifts of arcs from $r$ to $p$ in $A$. We define the following function $d\colon \mathcal V\times \mathcal V\to \Z$. Let $\hat A$ be the space obtained from $\overline A$ by introducing two points $r_\infty, p_\infty$ at infinity. We declare that the basis neighbourhoods in $\hat A$ of $r_\infty, p_\infty$ are the unions of these points with horizontal strips $\R\times(1,1-\epsilon), \R\times(0,\epsilon)$ disjoint from $\overline Q$.

Let $\psi \colon H_1(\hat A,\Z)\to \Z$ be the map on homology determined by the property that for each counterclockwise oriented circle $c$ around a single puncture in $\overline Q$ we have $\psi (c)=1$. For each pair $\gamma,\delta\in \mathcal V$ we define $d(\gamma,\delta)$ as follows. Let $\hat \gamma,\hat \delta$ be 1-chains in $\hat A$ obtained from $\gamma,\delta$ by compactifying using $r_\infty$ and $p_\infty$. Then $\hat \gamma -\hat \delta$ is a $1$-cycle. We put $d(\gamma,\delta)=\psi(\hat \gamma -\hat \delta)$. Since $\psi$ is a homomorphism, we have an additivity property: for any $\beta, \gamma,\delta\in \mathcal V$ we have $d(\beta, \gamma) + d(\gamma, \delta) = d(\beta, \delta)$.

We lift the arcs in $\mathcal A$ with nose $r$ to $\overline A$. Each such lift is an arc from $\bar r$ to $\bar p$.
Choose $\overline \alpha_1$ to be one of the lifts of $\alpha_1$. Notice that consecutive arcs along $\bar r$ in $\overline A$ are lifts of consecutive arcs in $A$ about $r$. Hence we may label arcs in $\overline A$ by $(\dotsc, \overline \alpha_1, \overline \alpha_2, \dotsc, \overline \alpha_{k+1}, \dotsc)$ where $\overline \alpha_{mk+i}$ is a lift of $\alpha_i$ for $1 \leq i \leq k$ and $m \in \Z$. In particular, $\overline \alpha_{k+1}$ is a lift of $\alpha_1$. See Figure~\ref{fig:lifts}.

Note that for $i<j$ the value $d(\overline \alpha_i,  \overline \alpha_j)$ can be easily described in the following way. If $\overline\alpha_i$ and $\overline \alpha_j$ are disjoint, then $d(\overline \alpha_i, \overline \alpha_j)$ is the number of punctures in $\overline Q$ lying in the bounded region of $\overline A$ between $\overline\alpha_i$ and $\overline \alpha_j$. If $\overline\alpha_i$ and $\overline \alpha_j$ intersect, then
$d(\overline \alpha_i, \overline \alpha_j)=|h(\overline \alpha_i, \overline \alpha_j)|-|t(\overline \alpha_i, \overline \alpha_j)|$.
Since $\alpha_1$ does not self-intersect, we have that its two lifts $\overline \alpha_1$ and $\overline \alpha_{k+1}$ are disjoint, and it is easy to see that $d(\overline \alpha_1, \overline\alpha_{k+1}) = |Q - \{r\}| = |\chi|$.

For $1 \leq i \leq k$, if the arcs $\alpha_{i}$ and $\alpha_{i+1}$ are disjoint (and hence do not form a minimal fish), then ${d(\overline \alpha_i, \overline \alpha_{i+1}) \geq 1}$, as there must be at least one puncture between them. If $\alpha_{i}$ and $\alpha_{i+1}$ intersect, then they form a minimal fish $F_i$, and we have $d(\overline \alpha_i, \overline \alpha_{i+1}) \geq 1 - |t(F_i)|$ as there must be at least one puncture in $h(F_i)$. Using the additivity of $d$, and the convention $|t(F_i)| = 0$ whenever there was no fish $F_i$, we obtain:
$$\sum_{i=1}^{k} (1 - |t(F_i)|) \leq \sum_{i=1}^k d(\overline \alpha_i, \overline \alpha_{i+1}) = d(\overline \alpha_1, \overline \alpha_{k+1}) = |\chi|$$
Equivalently,
$$k \leq \sum_{F \in \mathcal{F}^r} |t(F)| + |\chi|.\qedhere$$

\end{proof}

\begin{bibdiv}
\begin{biblist}*{labels={alphabetic}}

\bib{ABG}{article}{
    author={Aougab, Tarik}
    author={Biringer, Ian}
    author={Gaster, Jonah}
    title={Packing curves on surfaces with few intersections}
    eprint={arXiv:1610.06514}
    date={2016}
    }

\bib{B}{article}{
    author={Bar-Natan, Assaf}
    title={Arcs on punctured discs intersecting at most twice with endpoints on the boundary}
    status={in preparation}
    date={2017}
    }

\bib{JMM}{article}{
   author={Juvan, Martin},
   author={Malni{\v{c}}, Aleksander},
   author={Mohar, Bojan},
   title={Systems of curves on surfaces},
   journal={J. Combin. Theory Ser. B},
   volume={68},
   date={1996},
   number={1},
   pages={7--22}}

\bib{L}{article}{
   author={Leininger, Christopher},
   date={2011},
   title={personal communication}}

\bib{MRT}{article}{
   author={Malestein, Justin},
   author={Rivin, Igor},
   author={Theran, Louis},
   title={Topological designs},
   journal={Geom. Dedicata},
   volume={168},
   date={2014},
   number={1},
   pages={221--233}}

\bib{P}{article}{
   author={Przytycki, Piotr},
   title={Arcs intersecting at most once},
   journal={Geom. Funct. Anal.},
   volume={25},
   date={2015},
   number={2},
   pages={658--670},
}

\end{biblist}
\end{bibdiv}

\end{document}